\tikzstyle arrowstyle=[scale=1.5]
\tikzstyle directed=[postaction={decorate,decoration={markings,
    mark=at position .725 with {\arrow[arrowstyle]{stealth}}}}]
\tikzstyle directedsp=[postaction={decorate,decoration={markings,
    mark=at position .375 with {\arrow[arrowstyle]{stealth}}}}]
\newtheorem{theorem}{Theorem}[section]
\newtheorem{proposition}[theorem]{Proposition} 
\newtheorem{lemma}[theorem]{Lemma}              
\newtheorem{corollary}[theorem]{Corollary}         
\newtheorem{conjecture}[theorem]{Conjecture}         
\theoremstyle{definition}
\newtheorem{definition}[theorem]{Definition}        
\newtheorem{remark}[theorem]{Remark}              
\theoremstyle{remark}
\newtheorem{example}[theorem]{Example}          
\numberwithin{equation}{section}
\newcommand\stt{\rule[-2pt]{0pt}{5pt}}
\newcommand\cx[1]{\multicolumn{1}{|c|}{\!#1\!}}
\newcommand{\bnv}{\textup{\texttt{burnt\_vertices}}}
\newcommand{\damp}{\texttt{dampened\_edges}}
\newcommand{\tree}{\texttt{tree\_edges}}
\newcommand{\pf}{\mathbf{a}}
\newcommand{\pg}{\mathbf{b}}
\newcommand\cA{\mathcal{A}}
\newcommand\cM{\mathcal{M}}
\newcommand\cG{\mathcal{G}}
\newcommand\cS{\mathcal{S}}
\newcommand\cR{\mathcal{R}}
\newcommand\bG{\overline{\mathcal{\strut G}}}
\newcommand\bV{\overline{\strut V}}
\newcommand\bA{\overline{\strut A}}
\newcommand\run{r}
\DeclareMathOperator{\Ish}{Ish}
\DeclareMathOperator{\Shi}{Shi}
\DeclareMathOperator{\Cox}{Cox}
\DeclareMathOperator{\cN}{\textup{\texttt{neighbors}}}
\DeclareMathOperator{\ZP}{\mathcal{ZP}}
\DeclareMathOperator{\ZI}{\mathcal{ZI}}
\newcommand\tin{(1,n)}
\newcommand\ba{\mathbf{a}}
\newcommand\bb{\mathbf{b}}
\newcommand\be{\mathbf{e}}
\newcommand\bz{\boldsymbol{0}}
\newcommand\bw{\mathbf{w}}
\newcommand\bu{\mathbf{u}}
\newcommand\bx{\mathbf{x}}
\newcommand\Reals{\mathbb{R}}
\newcommand\N{\mathbb{N}}
\newcommand{\PF}{\mathsf{PF}}
\newcommand{\IPF}{\mathsf{IPF}}
\DeclareMathOperator{\dfs}{DFS}
\newcommand\tz{\tilde{z}}
\newcommand\tZ{\widetilde{Z}}
\newcommand\tba{\mathbf{\tilde{a}}}
\newcommand\bap{\mathbf{a'}}
\newcommand\tbw{\mathbf{\widetilde{w}}}
\newcommand\tw{\widetilde{w}}
\newcommand\bup{\mathbf{u'}} 
\newcommand\tI{\widetilde{\mathcal{I}}}
\newcommand\Ip{\mathcal{I}'}
\newcommand\fI{\mathcal{I}}
\newcommand\lbdp{\lambda'}
\title{Between Shi and Ish}
\author[R.~Duarte]{Rui Duarte} \address{CIDMA and Department of Mathematics, University of Aveiro, 3810-193 Aveiro, Portugal}
\email{rduarte@ua.pt}
\author[A.~Guedes~de~Oliveira]{Ant\'onio Guedes de Oliveira}
\address{CMUP and Department of Mathematics, Faculty of Sciences, University of Porto, 4169-007 Porto, Portugal}
\email{agoliv@fc.up.pt}
\begin{document}

\begin{abstract}
We introduce a new family of hyperplane arrangements in dimension $n\geq3$ that includes both the Shi arrangement and the Ish arrangement. We prove that all the members of a given subfamily have the same number of regions --- the connected components of the complement of the union of the hyperplanes --- which can be \emph{bijectively} labeled with the Pak-Stanley labeling. In addition, we show that, in the cases of the Shi and the Ish arrangements, the number of labels with \emph{reverse centers} of a given length is equal, and conjecture that the same happens with all of the members of the family.
\end{abstract}
\maketitle

\section{Introduction}
\noindent
In this paper we introduce a family of hyperplane arrangements in general dimension ``between Ish and Shi'', that is,
formed by hyperplanes that are hyperplanes of the Shi arrangement or hyperplanes of the Ish arrangement,
all of the same dimension.

More precisely,  we consider, for an integer $n\geq3$, hyperplanes of $\Reals^n$ of three different types.
Let, for $1\leq i<j\leq n$,
\begin{align*}
C_{ij}&=\big\{(x_1,\dotsc,x_n)\in\Reals^n\mid x_i=x_j\big\}\,;\\
S_{ij}&=\big\{(x_1,\dotsc,x_n)\in\Reals^n\mid x_i=x_j+1\big\}\,;\\
I_{ij}&=\big\{(x_1,\dotsc,x_n)\in\Reals^n\mid x_1=x_j+i\big\}\,.\\
\end{align*} \\[-30pt]
Note that the $n$-dimensional Coxeter arrangement is
$\Cox_n=\bigcup_{1\leq i<j\leq n} C_{ij}$, the $n$-dimen\-sion\-al Shi arrangement is
$\Shi_n=\Cox_n\cup \bigcup_{1\leq i<j\leq n} S_{ij}$ and the $n$-dimensional Ish arrangement,
recently introduced by Armstrong (Cf. \cite{AR}), is
$\strut\Ish_n=\Cox_n\cup \bigcup_{1\leq i<j\leq n} I_{ij}$.

Set $[n]:=\{1,\dotsc,n\}$ 
and $(k,n)=\{k+1,\dotsc,n-1\}$ for $1\leq k<n$,
and define, for any $X\subseteq\tin$,
$$\cA^X:=\Cox_n\cup \bigcup_{\substack{i\in X\\i<j\leq n}} S_{ij}\cup \bigcup_{\substack{i\in[n]\setminus X\\i<j\leq n}} I_{ij}\,,$$
so that $\Shi_n=\cA^{\tin}$ and $\Ish_n=\cA^\varnothing=\cA^{(n-1,n)}$.

We study the arrangements of form $\cA^X$ for $X\subseteq(1,n)$, with special interest in the Pak-Stanley labelings of the regions of the arrangements.
The labels are  \emph{$\cG$-parking functions} for special directed multi-graphs $\cG$ as defined by Mazin \cite{Mazin}.
In particular, we show that, in the case where $X=(k,n)$ for some $1\leq k<n$, there are $(n+1)^{n-1}$ regions which are  \emph{bijectively} labeled.

The notion of $G$-parking function was introduced by Postnikov and Shapiro in the construction of two algebras related to a general \emph{undirected} graph $G$
\cite{PosSha}. Later, Hopkins and Perkinson \cite{HP} showed that the labels of the Pak-Stanley labeling of the regions of a given hyperplane arrangement defined by $G$ are exactly the $G$-parking functions, a fact that had been conjectured by Duval, Klivans and Martin \cite{DKM}.
Recently, Mazin \cite{Mazin} generalized this result to a very general class of hyperplane arrangements, with a similar concept based on a general \emph{directed}
\emph{multi}-graph $\cG$. Whereas Hopkins and Perkinson's hyperplane arrangements include for example the (original) multidimensional Shi arrangement, Mazin's hyperplane arrangements include  the multidimensional $k$-Shi arrangement, the multidimensional Ish arrangement and in fact all the arrangements we consider here.

We start this study  in Section~2 by showing (Cf. \textbf{Theorem~\ref{teor1.1}}) that all arrangements of form $\cA^{(k,n)}$ ($1\leq k<n$)
 have the same \emph{characteristic polynomial}, namely
$$\chi(q)=q(q-n)^{n-1}\,,$$
from which it follows that they all have the same number of regions as well as  the same number of relatively bounded regions, namely $(n+1)^{n-1}$ and $(n-1)^{n-1}$,  by a famous result of Zaslavsky \cite{Zas}.

Since, by a result of Mazin \cite[Theorem 3.1.]{Mazin} the corresponding labels are exactly the $\cG$-parking functions, by showing that they are also $(n+1)^{n-1}$ we prove  (Cf. \textbf{Theorem~\ref{bij2}}) that the labelings are bijective.

For this purpose, in Section 3  we extend to directed multi-graphs a result of Postnikov and Shapiro \cite[Theorem 2.1.]{PosSha} which says (Cf. \textbf{Proposition~\ref{bij}}) that the number of $G$-parking functions is the number of  spanning trees of $G$. More precisely, we extend Perkinson, Yang and Yu's depth-first search version of Dhar's burning algorithm \cite{PYY}, which provides an explicit bijection between both sets, to directed multi-graphs. We also obtain from this algorithm  (Cf. \textbf{Proposition~\ref{prop.ish}}) a characterization of the labels of the $\Ish_n$ arrangement (the \emph{Ish-parking functions} of dimension $n$).

In Section 4 we show that the number of parking functions with \emph{reverse center} (which is essentially the \emph{center} \cite{DGO} of the reverse word --- see Definition~\ref{def.crc}) of a given length  is exactly  the number of Ish-parking functions with reverse center of the same length. This result follows in the direction
of a previous paper of ours \cite{DGO2}, on which it is based, and paves the way to \textbf{Conjecture~\ref{conj}}, which says that the same happens when the Ish-parking functions are replaced by the $\cG$-parking functions corresponding to the new arrangements introduced here.
However, opposite to what happens with the Shi and the Ish arrangements (and, in general, with the arrangements of form $\cA^{(k,n)}$),  the number of $\cG$-parking functions can be less than the number of regions.
In this paper, the Pak-Stanley labeling used in the Shi arrangement is a variation of the original definition by Pak and Stanley. In an Appendix, we explain how to use for this variation the construction developed in another paper of ours \cite{DGO} for inverting  the original Pak-Stanley labeling.
\medskip

\section{The characteristic polynomial}
In the following, we evaluate the \emph{characteristic polynomial}  $\chi(\cA^X,q)$ in the case where $X=(k,n)$. We show that, in this particular case, $\chi(\cA^{(k,n)},q)=\chi(\Shi_n,q)=\chi(\Ish_n,q)$. In other words, the characteristic polynomial of $\cA^{(k,n)}$ does not depend on 
$1\leq k<n$.

For this purpose, we use the finite field method \cite{Athan,Ardila}, which means that
we evaluate $\chi(\cA^{(k,n)},q)$ for any sufficiently large prime number $q$ by counting
the number of elements of the set
$X_q$ formed by the elements $(x_1,\dotsc,x_n)\in\mathbb{F}_q^n$ that verify, for each 
$1\leq i<j\leq n$, the conditions
\begin{equation*}
x_i\neq x_j \wedge
\begin{cases}
x_1\neq x_j+i,& \text{ if $i\leq k$;}\\
x_i\neq x_j+1,& \text{ if $i>k$.}\end{cases}
\end{equation*}
For convenience sake, fixed $n$, we see the elements of
$$S^n_q=\big\{(x_1,\dotsc,x_n)\in\mathbb{F}_q^n\mid\forall 1\leq i<j\leq n\,,\ x_i\neq x_j \big\}$$
as injective labelings in $\mathbb{F}_q$ of the elements of $[n]$ \cite{Ardila,Stan2}.
For instance, for $n=5$ and $q=13$, the labeling represented below corresponds to
$\bx:=(2,9,3,11,12)\in S^5_{13}$. Note that if $(x_1,\dotsc,x_5)=\bx$ then $x_1=x_5+3$.
Hence,  $\bx\in I_{3,5}$ and so $\bx\notin X_{13}$ for $X=\varnothing$.
On the other hand, for no $1\leq i<j\leq 5$ is $x_i=x_j+1$, and so $\bx\in X_{13}$ for $X=(1,5)$.

\smallskip
\begin{center}
\begin{figure}[ht]
\begin{tikzpicture}[scale=1.75]
\draw  circle [radius=1];
\coordinate [label=below:{\footnotesize$0$}] (a0) at (0., 1.);
\coordinate [label=below right:{\footnotesize$12$}] (a12) at (-0.46, 0.89);
\coordinate [label=right:{\footnotesize$11$}] (a11) at (-0.82, 0.57);
\coordinate [label=right:{\footnotesize$10$}] (a10) at (-0.99, 0.12);
\coordinate [label=right:{\footnotesize$9$}] (a9) at (-0.94, -0.35);
\coordinate [label=above right:{\footnotesize$8$}] (a8) at (-0.66, -0.75);
\coordinate [label=above :{\footnotesize$7$}] (a7) at (-0.24, -0.97);
\coordinate [label=above :{\footnotesize$6$}] (a6) at (0.24, -0.97);
\coordinate [label=above left:{\footnotesize$5$}] (a5) at (0.66, -0.75);
\coordinate [label=left:{\footnotesize$4$}] (a4) at (0.94, -0.35);
\coordinate [label=left:{\footnotesize$3$}] (a3) at (0.99, 0.12);
\coordinate [label=left:{\footnotesize$2$}] (a2) at (0.82, 0.57);
\coordinate [label=below left:{\footnotesize$1$}] (a1) at (0.46, 0.89);
\draw [fill] (a0) circle [radius=.025];
\draw [fill] (a1) circle [radius=.025];
\draw [fill] (a2) circle [radius=.025];
\draw [fill] (a3) circle [radius=.025];
\draw [fill] (a4) circle [radius=.025];
\draw [fill] (a5) circle [radius=.025];
\draw [fill] (a6) circle [radius=.025];
\draw [fill] (a7) circle [radius=.025];
\draw [fill] (a8) circle [radius=.025];
\draw [fill] (a9) circle [radius=.025];
\draw [fill] (a10) circle [radius=.025];
\draw [fill] (a11) circle [radius=.025];
\draw [fill] (a12) circle [radius=.025];
\node at (-0.53, 1.02) {$\textbf{5}$}; 
\node at (-0.95, 0.65) {$\textbf{4}$}; 
\node at (-1.08, -0.41) {$\textbf{2}$}; 
\node at (1.14, 0.14) {$\textbf{3}$}; 
\node at (0.95, 0.65) {$\textbf{1}$}; 
\end{tikzpicture}
\end{figure}
\end{center}
\smallskip

Now we prove a technical lemma.

\begin{lemma} \label{aux}
The set $A$ of injective mappings $f :[a] \to [a + b]$ satisfying the condition
$f(i)= f(j)+1 \Rightarrow i>j$ is in a natural bijection with the set $B$ of all mappings $g :[a] \to
[b] \cup \{ 0 \}$. In particular, $|A| = (b +1)^a$.
\end{lemma}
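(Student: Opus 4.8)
The plan is to exhibit an explicit bijection $\Phi \colon A \to B$ together with its inverse, so that $|A|=(b+1)^a$ follows at once from $|B|=(b+1)^a$ (the number of functions from an $a$-set to a $(b+1)$-set). The map I have in mind sends $f \in A$ to the function $g = \Phi(f)$ defined by
$$g(i) := \big|\{\, v \in [a+b]\setminus f([a]) : v < f(i) \,\}\big|,$$
the number of values \emph{missing} from the image of $f$ that lie below $f(i)$. Since $f$ is injective, its image omits exactly $b$ of the $a+b$ available values, so $g(i) \in [b]\cup\{0\}$ and $g$ is a legitimate element of $B$; equivalently, $g(i) = f(i) - r_i$, where $r_i$ is the rank of $f(i)$ among the image values listed in increasing order.

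First I would analyze the structure imposed by the defining condition of $A$. Writing the image $f([a])$ as a disjoint union of maximal runs of consecutive integers, I claim the hypothesis $f(i)=f(j)+1 \Rightarrow i>j$ says precisely that within each run the preimages of the values increase together with the values. Two consequences are central: all indices whose images lie in a common run receive the same value of $g$ (there are no missing values \emph{between} elements of a run), and distinct runs receive distinct values of $g$ that moreover increase with the runs (consecutive runs are separated by at least one missing value, so a higher run has strictly more gaps below it). Thus $g$ records, for each index, which run its image belongs to, encoded by the number of gaps below that run.

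To prove $\Phi$ is a bijection I would construct the inverse directly. Given $g \in B$, let $c_1 < \cdots < c_p$ be the distinct values taken, with multiplicities $m_1, \dots, m_p$. I would then declare the $t$-th run to consist of the $m_t$ consecutive values beginning at $(m_1+\cdots+m_{t-1})+c_t+1$, and assign to these values, in increasing order, the $m_t$ indices on which $g$ equals $c_t$, listed in increasing order. A short check confirms that the resulting $f$ is injective, lands in $[a+b]$ (the largest value used is $a+c_p \le a+b$), satisfies the run-monotonicity condition and hence lies in $A$, and that the two constructions are mutually inverse.

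The only genuinely delicate point is the structural claim in the second paragraph: that the defining implication of $A$ forces within-run monotonicity and, conversely, that monotonicity within maximal runs is \emph{equivalent} to that implication. The subtlety is that the condition only constrains pairs of values differing by exactly $1$, so I must verify that no constraint is lost or spuriously added across run boundaries --- precisely the observation that maximal runs are separated by missing values, so that values differing by $1$ always lie in the same run. Once this is settled, the correspondence between runs and the distinct values of $g$ makes the inverse well defined, and the bijection --- hence the count $(b+1)^a$ --- follows.
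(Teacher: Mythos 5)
Your proposal is correct and takes essentially the same approach as the paper: your forward map (counting the missing values below $f(i)$) is exactly the paper's $g_f(i) = \big|[f(i)] \setminus f([a])\big|$, and your inverse, which places the fibers of $g$ as consecutive blocks with run $t$ starting at $(m_1+\cdots+m_{t-1})+c_t+1$, is the same construction the paper describes procedurally by placing $g^{-1}(0), g^{-1}(1), \dotsc, g^{-1}(b)$ in order and skipping one position after each value. Your run-structure analysis merely spells out the verification the paper compresses into ``Clearly, these two constructions are inverse to each other.''
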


\begin{proof}
Given $f \in A$, define $g_f := \phi(f) \in B$ by setting $g_f (i) = |[f(i)] \setminus f([a])|$,
i.e. $g_f (i)$ is the number of elements of $[f(i)]$ which are not in the image of $f$.

Conversely, given $g \in B$ reconstruct $f_g := \phi^{-1} (g) \in A$ as follows.
For each $i \in [b] \cup \{ 0 \}$, the elements of the preimage $g^{-1} (i) \subseteq [a]$ are sent to consecutive
elements of $[a + b]$ in the increasing order. We start with $i=0$, then take $i=1$ and so on up to $i=b$, and after each step we skip one element, so that if $M = \max \{ g^{-1}(i) \}$ and $m = \min \{ g^{-1} (i +1) \}$ then $f_g(m)= f_g(M)+2$. Clearly, these two constructions are inverse to each other.
\end{proof}

\begin{theorem}\label{teor1.1}
For every integer $1\leq k< n$, the \emph{characteristic polynomial} of $\cA^{(k,n)}$ is
$$\chi(\cA^{(k,n)},q)=q(q-n)^{n-1}\,.$$
\end{theorem}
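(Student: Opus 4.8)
The plan is to compute $\chi(\cA^{(k,n)},q)$ by the finite field method already set up above: for every sufficiently large prime $q$ one has $\chi(\cA^{(k,n)},q)=|X_q|$, so it suffices to show that $|X_q|=q(q-n)^{n-1}$ and then invoke the fact that two polynomials agreeing on infinitely many values coincide. First I would exploit the translation invariance of all three families of hyperplanes: replacing $(x_1,\dotsc,x_n)$ by $(x_1+c,\dotsc,x_n+c)$ preserves each of the conditions defining $X_q$, so the additive group $\mathbb{F}_q$ acts freely on $X_q$ and $|X_q|=q\cdot N_q$, where $N_q$ counts the configurations with $x_1=0$. It then remains to prove $N_q=(q-n)^{n-1}$.

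To this end I would read a configuration with $x_1=0$ as an injective placement of the labels $2,\dotsc,n$ on $\mathbb{F}_q$ viewed cyclically, with label $1$ sitting at $0$. The Ish conditions $x_1\neq x_j+i$ (for $i\leq k$, $i<j$) forbid label $j$ from the $\min(k,j-1)$ positions immediately below $0$; in particular position $-1$ is forbidden to every label and is therefore a forced empty slot. Cutting the cycle at the gap between $-1$ and $0$ turns the remaining positions into a linear interval $\{1,\dotsc,q-2\}$ on which the relation ``$x_i=x_j+1$'' coincides with ordinary successor, since the only possible wrap-around would land on the empty slot $-1$ or on $0$. Under this identification the Ish conditions become nested range restrictions --- label $j$ is confined to $\{1,\dotsc,q-j\}$ for $2\leq j\leq k$, and to $\{1,\dotsc,q-1-k\}$ for $k<j\leq n$ --- while the Shi conditions become the purely linear requirement $x_i\neq x_j+1$ for $k<i<j$, which involves only the labels of the ``Shi block'' $\{k+1,\dotsc,n\}$.

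I would then count $N_q$ by placing the Shi block first and the remaining labels afterwards. The labels $k+1,\dotsc,n$ are placed injectively into $\{1,\dotsc,q-1-k\}$ subject to the successor condition; identifying $\{k+1,\dotsc,n\}$ with $[n-k]$ in increasing order, this is exactly the set $A$ of Lemma~\ref{aux} with $a=n-k$ and $a+b=q-1-k$, so there are $(b+1)^{a}=(q-n)^{n-k}$ such placements. Finally I would insert the labels $k,k-1,\dotsc,2$ in decreasing order: when label $j$ is placed, its interval $\{1,\dotsc,q-j\}$ already contains all $n-k$ block labels and all $k-j$ previously placed labels, and these are its only forbidden positions, leaving $(q-j)-(n-k)-(k-j)=q-n$ choices each. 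Multiplying gives $N_q=(q-n)^{n-k}\cdot(q-n)^{k-1}=(q-n)^{n-1}$, whence $|X_q|=q(q-n)^{n-1}$, as claimed.

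The main obstacle --- and the point the preceding steps are arranged to overcome --- is that the Shi block does \emph{not} enjoy the clean nested-range structure: the forbidden positions $x_i+1$ make its term-by-term count non-uniform, which is precisely what Lemma~\ref{aux} is designed to repackage as the uniform power $(q-n)^{n-k}$. The two facts I would check carefully are that the labels $\leq k$ carry no successor constraints among themselves or with the block (so that imposing none of them is correct) and that placing the block before the nested labels makes the only cross-interaction pure distinctness; both follow directly from the definition of $\cA^{(k,n)}$, since every Shi condition has both of its indices exceeding $k$.
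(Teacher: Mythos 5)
Your proposal is correct, and it follows the same global strategy as the paper: the finite field method, a factor of $q$ from normalizing $x_1=0$, and a two-stage placement whose counts are $(q-n)^{n-k}$ and $(q-n)^{k-1}$. The interesting difference is where Lemma~\ref{aux} is made to do its work, and here your routing is tighter than the paper's. The paper counts the Shi block $\{k+1,\dotsc,n\}$ by an ad hoc bijection (each $a_i$ is sent to the index of the first empty position after it), which is in substance a re-proof of Lemma~\ref{aux}, and it then cites Lemma~\ref{aux} for placing $a_k,\dotsc,a_2$, describing that step by the condition ``if $a_i$ and $a_j$ are placed consecutively then $a_i<a_j$''. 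As you correctly observe, the labels $2,\dotsc,k$ satisfy no successor conditions whatsoever in $\cA^{(k,n)}$: every $S_{ij}$ present has $k<i<j$, and with $x_1$ frozen at $0$ every $I_{ij}$ becomes a pure range restriction. So the condition quoted in the paper is not among the defining conditions of $X_q$, and Lemma~\ref{aux} applied literally to the $q-n+k-1$ remaining positions would give $(q-n+1)^{k-1}$ rather than $(q-n)^{k-1}$ (even after discarding the forced-empty position $-1$, the successor condition defines a different set of placements, only coincidentally equinumerous with the correct one); what actually produces $(q-n)^{k-1}$ is precisely your nested-interval argument, in which each successive insertion has exactly $q-n$ admissible slots. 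By contrast, your use of Lemma~\ref{aux} on the Shi block, with $a=n-k$ and $a+b=q-1-k$, satisfies its hypothesis verbatim. Your preliminary observation that position $-1$ is empty in every admissible configuration, so that cutting the cycle there turns $x_i=x_j+1$ into an honest linear successor relation, is likewise a point the paper leaves implicit. In short: same decomposition and same key lemma, but you attach the lemma and the direct count to opposite factors, and in your version every appeal matches the constraints it is invoked for.
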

\begin{proof}
Let $X=(k,n)$ and let us count $|X_q|$ by counting the number of different labelings for which there does not exist $(i,j)$ with $1\leq i<j\leq n$ such that $i\in X$ and $x_i=x_j+1$, or such that $i\notin X$ and $x_1=x_j+i$.

First, we choose one position, $a_1\in\mathbb{F}_q$, out of $q$ possible positions, to place the element that labels $1$, $a_1$. In what follows we assume, without loss of generality, that $a_1=0$.

Second, we place the elements $a_{k+1}, \ldots, a_n \in \mathbb{F}_q$. These elements cannot be placed in positions $q-k, \ldots, q-1$ because $a_1 \neq a_j+i$, for all $i,j \in [n]$ such that $i \leq k$ and $i<j$, nor in position 0, already taken by $a_1$. In addition, elements between consecutive unoccupied positions must be placed in increasing order since $a_i \neq a_j + 1$, for all $i,j \in [n]$ such that $k < i < j$.  Suppose that $a_{k+1}, \ldots, a_n \in \mathbb{F}_q$ are already placed. Then, let
$b_1, \ldots, b_{q-n}=q-k$ be the first $q-n$ unoccupied positions, and consider, for every $i$ with $i>k$, \emph{the order  $f(i)$  of the first empty position after $a_i$}. In other words,
$$f(i)=\min\,\{j\mid b_j>a_i\}\,.$$
This defines a mapping $f\colon\{k+1,\dotsc,n\}\to[q-n]$. The set of such mappings is clearly in bijection with the set of possible placements of these elements.

Third, the number of ways of placing $a_k, a_{k-1}, \ldots, a_2$ in the remaining positions in such a way thay if $a_i$ and $a_j$ are placed consecutively then $a_i < a_j$ is, by Lemma \ref{aux}, $(q-n)^{k-1}$.

Consequently, $|X_q| = q (q-n)^{n-k} (q-n)^{k-1} = q(q-n)^{n-1}$.
\end{proof}

For example, consider the configuration for $p=13$ and $n=5$ represented above, with labeling
$\bx=(2,9,3,11,12)$, and suppose that $X=(2,5)=\{3,4\}$. Note that $a_1=2$. Placing as above $3$, $4$ and $5$ we obtain the first $p-n=8$ unoccupied positions, in clockwise order,
$$\langle b_1,\dotsc, b_8\rangle =\langle 4,5,6,7,8,9,10,0\rangle$$
and $f\colon\{3,4,5\}\to[8]$ is such that $f(3)=1$ and $f(4)=f(5)=8$.
Now, $\ell_2=6\in[8]$ since it occupies the sixth position (out of eight) not labeled by $a_i$, for $i\in\{1,3,4,5\}$.

In terms of the arrangements $\cA^{(k,n)}$, this means, by a celebrated result of Zaslavsky \cite{Zas}, that the number of \emph{regions}, $r(\cA^{(k,n)})$, and the number of \emph{relatively bounded regions}, $b(\cA^{(k,n)})$, do not depend on $1\leq k<n$, being
\begin{align*}
r(\cA^{(k,n)})&=(-1)^n\chi(-1)=(n+1)^{n-1}
\shortintertext{and}
b(\cA^{(k,n)})&=(-1)^{n-1}\chi(1)=(n-1)^{n-1}\,.
\end{align*}
Note that these results were known (and proven similarly \cite{Ardila}) for both the Shi and the Ish arrangements \cite{AR,LRW}, although not for the remaining arrangements of form $\cA^{(k,n)}$, which are, to the best of our knowledge, considered here for the first time.
We represent both the Shi and Ish arrangements in dimension $n=3$ on Figure~\ref{arrs}.

The \emph{Whitney polynomials} of $\Shi_3$ and $\Ish_3$ are not equal (in fact, not even the numbers of faces of dimensions 1 and 2 ---which we can count directly in Figure~\ref{arrs}---  are equal).
Whereas $w(\Shi_3,t,q)=6qt^2+3q(2q-5)t+q(q-3)^2$ (Cf. \cite[Theorem~6.5]{Athan}), by \cite[Theorem~6.3]{Athan}, $w(\Ish_3,t,q)=7qt^2+2q(3q-8)t+q(q-3)^2$.

Note that in both cases the
$16$ regions are labeled injectively with elements of $\{0,1,2\}^3$, which differ exactly in two such elements: whereas $201$ is used as a label in $\Shi_3$ but not in $\Ish_3$, $022$ is used in $\Ish_3$ but not in $\Shi_3$. We see this in detail in Example~\ref{expf}.

\begin{figure}[t]
\noindent
\begin{tikzpicture}[scale=1.2]
\draw  [thick]   (-0.866, -1.5) -- (1.732, 3.)  node [pos=1.,above,sloped]  {\scriptsize$x=z$};
\draw  [thick]   (-1.732, 3.) -- (0.866, -1.5)  node [pos=1.05,above,sloped]  {\scriptsize$y=z$};
\draw  [thick]  (-3., 0.) -- (2., 0.)   node [pos=1.05,above,sloped]  {\scriptsize$x=y$};
\draw  [thick,blue]  (-2.732, 3.) -- (-0.133, -1.5) node [pos=1.1,above,sloped]  {\scriptsize$y=z+1$};
\draw  [thick]  (-1.866, -1.5) -- (0.732,  3.) node [pos=1.,above,sloped]  {\scriptsize$x=z+1$};\small
\draw  [thick]  (-3., 0.866) -- (2., 0.866) node [pos=1.1,above,sloped]  {\scriptsize$x=y+1$};
\node at (-0.5,2.233){\small$021$};
\node at (0.25,1.433){\small$020$};
\node at (-2.5,1.433){\small$012$};
\node at (1.5,1.433){\small$120$};
\node at (-1.25,1.433){\small$011$};
\node at (-2.5,0.433){\small$002$};
\node at (-1.,0.633){\small$001$};
\node at (-0.5,0.233){\small$000$};
\node at (0.,0.633){\small$010$};
\node at (1.5,0.433){\small$110$};
\node at (-2.5,-0.866){\small$102$};
\node at (-0.5,-0.233){\small$100$};
\node at (-1.,-0.866){\small$101$};
\node at (0.,-0.866){\small$200$};
\node at (1.5,-0.866){\small$210$};
\node at (-0.5,-1.533){\small$201$};
\end{tikzpicture}
\hfill
\begin{tikzpicture}[scale=1.2]
\draw  [thick]  (-0.866, -1.5) -- (1.732, 3.)  node [pos=1.,above,sloped]  {\scriptsize$x=z$};
\draw  [thick]  (-1.732, 3.) -- (0.866, -1.5)  node [pos=1.05,above,sloped]  {\scriptsize$y=z$};
\draw  [thick]  (-3., 0.) -- (2., 0.) node [pos=1.05,above,sloped]  {\scriptsize$x=y$};
\draw  [thick,blue]  (-2.866, -1.5) -- (-0.268, 3.)  node [pos=1.1,above,sloped]  {\scriptsize$x=z+2$};
\draw  [thick]  (-1.866, -1.5) -- (0.732,  3.)  node [pos=1.1,above,sloped]  {\scriptsize$x=z+1$};
\draw  [thick]  (-3., 0.866) -- (2., 0.866) node [pos=1.1,above,sloped]  {\scriptsize$x=y+1$};
\node at (-1.,2.533) {\small$022$};
\node at (0.,2.533){\small$021$};
\node at (1.,2.533){\small$020$};
\node at (-2.,1.433){\small$012$};
\node at (1.5,1.433){\small$120$};
\node at (-1.,1.1){\small$011$};
\node at (-2.7,0.433){\small$002$};
\node at (-1.333,0.433){\small$001$};
\node at (-0.5,0.233){\small$000$};
\node at (0.,0.633){\small$010$};
\node at (1.5,0.433){\small$110$};
\node at (-2.7,-0.433){\small$102$};
\node at (-2.,-0.866){\small$101$};
\node at (-1.,-0.866){\small$100$};
\node at (0.,-0.866){\small$200$};
\node at (1.5,-0.866){\small$210$};
\end{tikzpicture}
\caption{Pak-Stanley labelings of $\Shi_3$ and $\Ish_3$}
\label{arrs}
\end{figure}

\section{The Pak-Stanley labeling}\label{sec.2}

Here, we consider a general process of labeling the regions of these arrangements. It was introduced by Pak and Stanley \cite{Stan2} for the Shi arrangement and was recently studied (in a general setting that covers our arrangements) by Mazin \cite{Mazin}, who calls it a \emph{Pak-Stanley}-labeling of the arrangement.

For our purposes, this labeling is characterized as follows. Let $\ell(\cR)$ be the label of a region $\cR$ and
write as usual $\be_i$ for the element of $\{0,1,\dotsc,n-1\}^n$ that differs from $\bz=(0,\dotsc,0)$ on the $i$th coordinate, which is equal to $1$. Let $\cR_0$ be the region defined by
$$x_n+1>x_1>x_2>\dotsb>x_n$$
(limited by the hyperplane $I_{1n}=S_{1n}$ and by the hyperplanes of form $C_{j\,j+1}$ for $1\leq j<n$).
Then set $\ell(\cR_0)=\bz$. Now, given two regions, $\cR_1$ and $\cR_2$, separated by a hyperplane $H$ such that $\cR_0$ and
$\cR_1$ are on the same side of $H$, let
$$\ell(\cR_2)=\ell(\cR_1)+\begin{cases}\be_i,&\text{if $H\in C_{ij}$;}\\\be_j,&\text{if $H\in S_{ij}\cup I_{ij}$.}\end{cases}$$

Define, for $X\subseteq(1,n)$, the directed multigraph $\cG^X=(V,A^X)$ such that $V=[n]$ and $A^X$ is the multiset (with $|\cA^X|$ elements) formed by adding, for each $H\in\cA^X$, a new arc $a\in A^X$, as follows. If $H=C_{ij}$ for some $1\leq i<j\leq n$, then $a=(i,j)$; if $H=S_{ij}$, then $a=(j,i)$; if $H=I_{ij}$, then $a=(j,1)$.
Hence, an arc of form $(j,1)$ may occur more than once in the multiset $A^X$.
In Figure~\ref{fig1} we consider the graphs $\cG^{(1,3)}$ and $\cG^{\varnothing}$ thus associated with $\Shi_3$ and $\Ish_3$, respectively.

Then, as shown by Mazin \cite{Mazin}, the set of labels obtained above is the set of \emph{$\cG^X$-parking functions}, according to the definition that follows.
In the case where $\cA^X$ is the Shi arrangement (i.e., when $X=(1,n)$),  the \emph{$\cG^X$-parking functions} are simply the \emph{parking functions}. In the case where $\cA^X$ is the Ish arrangement (when $X=\varnothing$),  the \emph{$\cG^X$-parking functions} are the \emph{Ish-parking functions}.

\begin{definition}[{Cf.~\cite{Mazin}}]\label{def-GPark}
Let $\cG=(V,A)$ be a (finite) directed connected multigraph without loops, where $V=[n]$ for some $n\in\N$.
A function $\pf\colon[n]\to\N_0$ is a \emph{$\cG$-parking function} if for every non-empty subset $I\subseteq[n]$ there exists a vertex $i\in I$ such that the number of elements $j\notin I$ such that $(i,j)\in A$, counted with multiplicity, is greater than or equal to $\pf(i)$.
\end{definition}

In the original Pak-Stanley labeling $\lambda$ of the regions of $\Shi_n$ (see \cite[p. 484]{Stan2}), also
$\lambda(\cR_0)=\bz$ but, given as before regions $\cR_1$ and $\cR_2$ separated by a hyperplane $H$ such that $\cR_0$ and $\cR_1$ are on the same side of $H$,
$$\lambda(\cR_2)=\lambda(\cR_1)+\begin{cases}\be_j,&\text{if $H\in C_{ij}$;}\\\be_i,&\text{if $H\in S_{ij}$.}\end{cases}$$
From Mazin's result, the set of labels is the same, since $\cG^{(1,n)}$ remains the \emph{complete directed graph on $n$ vertices} if all arcs are reversed.

This set of labels is the set $\PF_n$ of \emph{parking functions} \cite{Stan2}, which are the functions $\ba\colon[n]\to\{0,1,\dotsc,n-1\}$ such that
$$|\ba^{-1}(\{0,1,\dotsc,i\})|> i\qquad(i=0,1,\dotsc,n-1)\,.$$
In general, the graph $\cG^X$ associated with $\cA^X$ may be obtained from the complete directed graph on $n$ vertices by replacing every arc of form $(j,i)$ for $i<j\leq n$ and $i\in(1,n)\setminus X$ by an arc of form $(j,1)$.

\begin{figure}[t]
\noindent
\strut\hfill
\begin{tikzpicture}[scale=1.5]
\draw  [directed, thick]  (0,1) -- (.86,-.5);
\draw [fill] (0,1) node [above] {$1$} circle [radius=.05];
\draw  [directed, thick]  (0,1) -- (-.86,-.5) ;
\draw [fill] (-.86,-.5) node [below left] {$2$} circle [radius=.05];
\draw  [directed, thick]  (-.86,-.5) -- (.86,-.5) ;
\draw [fill] (.86,-.5) node [below right] {$3$} circle [radius=.05];
\draw  [directed, thick]  (.86,-.5) to [out=150,in=30] (-.86,-.5);
\draw  [directed, thick]  (-.86,-.5) to [out=30,in=270] (0,1);
\draw  [directed, thick]  (.86,-.5) to [out=150,in=270] (0,1);
\end{tikzpicture}
\hfill
\begin{tikzpicture}[scale=1.5]
\draw  [directed, thick]  (0,1) -- (.86,-.5);
\draw [fill] (0,1) node [above] {$1$} circle [radius=.05];
\draw  [directed, thick]  (0,1) -- (-.86,-.5) ;
\draw [fill] (-.86,-.5) node [below left] {$2$} circle [radius=.05];
\draw  [directed, thick]  (-.86,-.5) -- (.86,-.5) ;
\draw [fill] (.86,-.5) node [below right] {$3$} circle [radius=.05];
\draw  [directed, thick]  (-.86,-.5) to [out=30,in=270] (0,1);
\draw  [directed, thick]  (.86,-.5) to [out=150,in=270] (0,1);
\draw  [directed, thick]  (.86,-.5) to [out=135,in=285] (0,1);
\end{tikzpicture}
\hfill\strut

\noindent
\strut\hfill$\cG^{(1,3)}$\qquad\qquad\hfill$\cG^{\varnothing}$\hfill\strut
\caption{Directed multi-graphs associated with $\Shi_3$ ($\cG^{(1,3)}$) and $\Ish_3$ ($\cG^{\varnothing}$)}\label{fig1}
\end{figure}

\subsection{The $\dfs$-burning algorithm}

In this section  we extend the $\dfs$-burning algorithm 
by Perkinson, Yang and Yu \cite{PYY} and part of the results therein to directed multi-graphs $\cG=(V,A)$.

Let $\pf \colon [n] \to \N_0$ and $\bG=(\bV,\bA)$ where $\bV=\{0\}\cup V$ and $$\bA=\big\{(0,i)\mid i\in[n]\big\}\cup \big\{(j,i)\mid (i,j)\in A\big\}$$
and define, for each $v\in\bV$, the list $\cN(v)$ by ordering in some fixed way the \emph{multiset}
$$\big\{ i\in[n]\mid (v,i)\in\bA\big\}=\begin{cases}[n],&\text{ if $v=0$}\\
\big\{ i\in[n]\mid (i,v)\in A\big\},&\text{ if $v\neq0$}\end{cases}$$
Note that
 the algorithm terminates in all cases, either with \emph{all} vertices burned or else with only \emph{some} ---but not all--- vertices burned.
 In the first case, we say that \emph{$\pf$ fits $\cG$}.

{\small
\begin{algorithm}[ht]
\caption{ {\bf $\dfs$-burning algorithm (adapted)}.}
\begin{algorithmic}[1]
  \Statex
  \Statex{\hspace{-0.5cm}\sc algorithm}
  \Statex{\bf Input:} $\pf\colon[n]\to\N_0$
  \State $\bnv =\{0\}$
  \State $\damp = \{\,\}$
  \State $\tree= \{\,\}$
  \State execute {\sc dfs\_from}($0$)
  \Statex{\bf Output:} {$\bnv$, $\tree$ and $\damp$}
  \Statex \rule{0.5\textwidth}{.4pt}
  \Statex{\hspace{-0.5cm}\sc auxiliary function}
    \Function{\sc dfs\_from}{$i$}
      \ForAll{$j$ in $\cN(i)$}
	\If{$j\notin\bnv$}
	  \If{$\pf(j)=0$}
	    \State append $(i,j)$ to $\tree$
	    \State append $j$ to $\bnv$
	    \State {\sc dfs\_from}$(j)$
	  \Else
	     \State append $(i,j)$ to $\damp$
	     \State $\pf(j) = \pf(j)-1$
	  \EndIf
       \EndIf	
     \EndFor
   \EndFunction
  \end{algorithmic}
\label{dfs-burning}
\end{algorithm}
}

\begin{proposition}\label{algor}
Given a directed multigraph $\cG$ on $[n]$ and a function $\pf\colon[n]\to\N_0$,
$\pf$ fits $\cG$ if and only if $\pf$ is a $\cG$-parking function.
\end{proposition}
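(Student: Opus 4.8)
I would prove this as the directed‑multigraph analogue of Dhar's burning criterion, establishing both implications in contrapositive form. The single observation that drives everything is a bookkeeping identity for the auxiliary graph $\bG$: since $\cN(0)=[n]$ and, for $v\neq 0$, $\cN(v)$ is the multiset $\{i\mid (i,v)\in A\}$, each \emph{visit} to a vertex $i$ during the run (an iteration of the inner loop with current neighbour $j=i$ and $i\notin\bnv$, recorded by appending the edge $(\cdot,i)$ to $\tree$ or to $\damp$) is in bijection with either the unique root arc $(0,i)$ or an arc $(i,k)\in A$ pointing from $i$ to an already‑burnt vertex $k$, counted with multiplicity. A vertex $i$ with initial value $\pf(i)=p$ is dampened exactly $p$ times and then burns on its $(p+1)$‑st visit; the one visit furnished by the root supplies the extra $+1$, and this is precisely what converts the usual strict burning inequality into the ``$\geq$'' appearing in Definition~\ref{def-GPark}.

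\emph{Parking function $\Rightarrow$ fits.} I would argue the contrapositive. Suppose the algorithm halts with a nonempty set $I\subseteq[n]$ of unburnt vertices. Every burnt vertex $k$ has \textsc{dfs\_from}$(k)$ executed exactly once, and since a vertex of $I$ is never added to $\bnv$, all of its occurrences in the lists $\cN(k)$ over burnt $k$ are processed, always as dampenings. Hence, for each $i\in I$, the number of visits to $i$ equals $1+d_i$, where $d_i$ is the number of arcs from $i$ to $[n]\setminus I$ in $\cG$ counted with multiplicity; this count is exact and independent of the traversal order. Because $i$ never burns, every visit is a dampening and so there are at most $\pf(i)$ of them, giving $1+d_i\le\pf(i)$ and therefore $d_i<\pf(i)$ for \emph{every} $i\in I$. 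Thus $I$ witnesses the failure of the condition in Definition~\ref{def-GPark}, and $\pf$ is not a $\cG$‑parking function.

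\emph{Fits $\Rightarrow$ parking function.} Again by contraposition, assume $\pf$ is not a $\cG$‑parking function, so there is a nonempty $I\subseteq[n]$ with $d_i\le\pf(i)-1$ for all $i\in I$, where $d_i$ counts the arcs from $i$ to $[n]\setminus I$. Let $i^\ast$ be the first vertex of $I$ to be burned during the run. Just before this happens, every burnt vertex lies in $\{0\}\cup([n]\setminus I)$, so the visits to $i^\ast$ come only from the root and from arcs of $i^\ast$ into $[n]\setminus I$; their total is at most $1+d_{i^\ast}\le\pf(i^\ast)$. But burning $i^\ast$ requires a $(\pf(i^\ast)+1)$‑st visit, a contradiction. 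Hence no vertex of $I$ is ever burned, the algorithm halts with unburnt vertices, and $\pf$ does not fit $\cG$.

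The main obstacle is not any single implication but getting the bookkeeping exactly right: one must read $\cN$ as a \emph{multiset} so that repeated arcs (in our application, the multiple copies of $(j,1)$) are counted with the correct multiplicity, and one must track the root's single contribution carefully, since it is exactly this $+1$ that aligns the burning threshold with the ``$\geq$'' of the parking‑function condition rather than with a strict inequality. A secondary point is order‑independence of the depth‑first search: rather than proving as a standalone lemma that the final burnt set does not depend on the traversal order, I would sidestep it by reasoning about the \emph{final} unburnt set in the first direction and about the \emph{first} vertex of $I$ to burn in the second, both of which are well defined for any individual run of the algorithm.
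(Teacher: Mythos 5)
Your proof is correct, and at its core it is the same visit-counting (Dhar-type) argument for the reversed graph $\bG$ that the paper uses; the difference lies in how the two implications are organized. For ``parking $\Rightarrow$ fits'' your contrapositive is, up to reorganization, the paper's proof by contradiction: both count the visits that the burnt vertices force upon an unburnt vertex. In fact your accounting here is slightly cleaner: the paper's proof ``notes'' that the terminal values $\pg(j)$ of unburnt vertices are strictly positive, which as stated is only guaranteed with $\geq$, and the strict inequality needed for the contradiction really comes from the root arc's extra $+1$ --- exactly the point you make explicit. For ``fits $\Rightarrow$ parking'' the routes genuinely diverge: the paper argues directly, forming the submultigraph $\cS$ on the arc multiset $\tree\cup\damp$, observing that $\cS$ is acyclic because every recorded arc goes from a burnt vertex to a not-yet-burnt one, and then, for an \emph{arbitrary} nonempty $I$, taking a source of the restriction of $\cS$ to $I$; that source receives all of its $\pf(i)+1$ incoming arcs from outside $I$, at most one of them from the root, which verifies the condition of Definition~\ref{def-GPark}. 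You instead take the contrapositive and show that a violating set $I$ can have no first-burnt vertex. The pivot vertex is the same in both arguments (the paper's source of the restriction of $\cS$ to $I$ is precisely the first vertex of $I$ to burn), so the difference is organizational rather than conceptual; what the paper's heavier route buys is the explicit structure $\tree$, which is immediately reused in Proposition~\ref{bij} to biject $\cG$-parking functions with spanning arborescences, while your route is shorter and self-contained.
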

\begin{proof}
The arguments in this proof are essentially the same of Perkinson, Yang and Yu \cite{PYY}, but we present them for completeness sake.
We start by proving that if $\pf$ fits $\cG$ then $\pf$ is a $\cG$-parking function. Consider the lists $\bnv$, $\tree$ and
$\damp$ at the end of the execution of the algorithm and form the submultigraph $\cS$ of $\bG$ with the arcs from the multiset $M=\damp\cup\tree$.
Note that when, in the course of the algorithm, $(k,i)$ was inserted in $\damp$ or in $\tree$, $k$ was already in $\bnv$ but $i$ was not. Hence, $\cS$ is acyclic.

Consider also, for each $i\in[n]$, the multiset
$m(i)=\{ 0\leq k\leq n\mid (k,i)\in M\}$ and note that $\pf(i)=|m(i)|-1$, again counting the elements with multiplicity, and that $0$ may occur in $m(i)$ but not more than once.

Hence, if $I\subseteq[n]$, the restriction of $\cS$ to $I$ being acyclic,
there exists $i\in I$ such that $(k,i)\in M$ implies that $k\notin I$. Hence, the number of arcs of form $(k,i)$, with $k\notin I$, in $M$ (and so in $\bA$) is greater than or equal to $\pf(i)$.

Conversely, suppose that $\pf$ is a $\cG$-parking function and that, at the end of the execution of the $\dfs$-burning algorithm,
$\bnv=\langle0,v_1,\dotsc,v_k\rangle$, where $k<n$. Let $\pg$ be the value of $\pf$ at the termination of the algorithm, and note that $\pg(j)>0$ for every
$j\notin\{0,v_1,\dotsc,v_k\}$.
Let $I=[n]\setminus \{0,v_1,\dotsc,v_k\}$. Since $\pf$ is a $\cG$-parking function, there must exist
$j\in I$ such that there are $m \geq\pf(j)$ elements $i\in\{0,v_1,\dotsc,v_k\}$ with $(i,j)\in\bA$.
But this is impossible, since, by definition, $\pg(j)=\pf(j)-m$.
\end{proof}

Now we come back to the multiple digraphs $\cG^X$ associated with the hyperplane arrangements $\cA^X$ with $X\subseteq(1,n)$.

By definition, again (taking $I=[n]$), $0\in\pf([n])$. Since, for $i=1,\dotsc,n$, the number of arcs in $\bA$
with head $i$, counted with multiplicity, is always $n$,
if $\pf$ is a $\cG^X$-parking function for $X\subseteq(1,n)$, then
\begin{equation}\label{fcor}
\pf([n])\subseteq\{0,1,\dotsc,n-1\}\,.
\end{equation}

\begin{example} \label{expf}
Consider the multiple digraphs $\cG^{(1,3)}$ and $\cG^{\varnothing}$ represented in Figure~\ref{fig1}. We represent
$\overline{\cG^{(1,3)}}$ and $\overline{\cG^{\varnothing}}$
through the list
$\big\langle\cN(0),\dotsc,\cN(3)\big\rangle$, respectively
\begin{align*}
&\big\langle\langle3,2,1\rangle,\langle3,2\rangle,\langle3,1\rangle,\langle2,1\rangle\big\rangle\qquad\text{and}\\
&\big\langle\langle3,2,1\rangle,\langle3,3,2\rangle,\langle1\rangle,\langle2,1\rangle\big\rangle
\end{align*}

If we apply the algorithm with the graph $\overline{\cG^{(1,3)}}$ to $\pf_1=(2,0,1)$ and to $\pf_2=(0,2,2)$, we conclude that $\pf_1$ is a $\cG^{(1,3)}$-parking function whereas $\pf_2$ is not. With the graph $\overline{\cG^{\varnothing}}$, the situation is the opposite one.

In fact, for $\overline{\cG^{(1,3)}}$, if we take as input $\pf=\pf_1$ we obtain $\bnv=\langle0,2,3,1\rangle$ whereas if we take as input $\pf=\pf_2$ we obtain $\bnv=\langle0,1\rangle$. These lists, for $\overline{\cG^{\varnothing}}$, are respectively $\langle0,2\rangle$ and $\langle0,1,3,2\rangle$.

We may follow the execution of the algorithm in a drawing where, for each $i\in[3]$, there are $\pf_i+1$ empty boxes;
during the execution of {\sc dfs\_from}{($i$)}, for a given value of $j$, at Line~14, the upmost empty box in column $j$ is filled with $i$ . Then, the arcs of form $(k,i)$ in $\damp$ are marked $k$ in column $i$ \emph{above} the bottom row, and the bottom row is filled with $k$ for $(k,i)\in\tree$.
We obtain, respectively\footnote{The two arcs of form $(1,3)$ appear, one in $\damp$ and the other one in $\tree$, when the $\dfs$-burning algorithm
is executed with $\pf=\pf_2$ and graph $\cG^{\varnothing}$.},
$$
\begin{array}{ccc}
\cline{1-1}
\cx{3}&&\\
\cline{1-1}\cline{3-3}
\cx{2}&&\cx{0}\\
\hline
\hline
\cx{0}&\cx{0}&\cx{2}\\
\hline
{\scriptstyle1}&{\scriptstyle2}&{\scriptstyle3}
\end{array}\quad
\begin{array}{ccc}
\cline{2-3}
&\cx{0}&\cx{0}\\
\cline{2-3}
&\cx{1}&\cx{1}\\
\cline{2-3}\\[-12pt]
\hline
\cx{0}&\cx{}&\cx{}\\
\hline
{\scriptstyle1}&{\scriptstyle2}&{\scriptstyle3}
\end{array}
\qquad\qquad\qquad
\begin{array}{ccc}
\cline{1-1}
\cx{2}&&\\
\cline{1-1}\cline{3-3}
\cx{0}&&\cx{0}\\
\hline
\hline
\cx{}&\cx{0}&\cx{}\\
\hline
{\scriptstyle1}&{\scriptstyle2}&{\scriptstyle3}
\end{array}\quad
\begin{array}{ccc}
\cline{2-3}
&\cx{0}&\cx{0}\\
\cline{2-3}
&\cx{1}&\cx{1}\\
\cline{2-3}\\[-12pt]
\hline
\cx{0}&\cx{3}&\cx{1}\\
\hline
{\scriptstyle1}&{\scriptstyle2}&{\scriptstyle3}
\end{array}
$$
\end{example}

\begin{remark}
We use the same notation, $(i,j)$, for all of the different arcs connecting vertex $i$ to vertex $j$ (following  Mazin \cite{Mazin})
so as to keep the  description of Perkinson, Yang and Yu \cite{PYY} of their algorithm. But note that
$\tree$ must distinguish these arcs from each other.
One way of doing this is to distinguish the various occurrences of a vertex in the list of neighbors of a vertex, e.g., taking for $\Ish_3$
$$\big\langle\langle3,2,1\rangle,\langle3_1,3_2,2\rangle,\langle1\rangle,\langle2,1\rangle\big\rangle\,.$$
and changing Line~6 to ``\textbf{foreach} {$j_\ell$ in $\cN(i)$}''
\footnote{In an actual implementation with few points, we can take, if $n<10$, for example, $j_\ell=10\ell+j=k$ so that $j=\textrm{Mod}(k,10)$.}
 and Line~9 to ``append $(i,j_\ell)$ to $\tree$'', but keeping Line~7, ``\textbf{If} {$j\notin\bnv$}'', etc. We then obtain at the end, for $\pf=022$,
 \begin{align*}&\tree=\langle (0,1),(1,3_2),(3,2)\rangle
 \shortintertext{but for $\pf=021$ we obtain}
&\tree=\langle (0,1),(1,3_1),(3,2)\rangle\,.
 \end{align*}
Note also that the order within the lists $\cN(i)$ is not relevant for the result in itself, but $\tree$ and the submultigraph $\cS$ depend on it.
\end{remark}

{\small
\begin{algorithm}[ht]
\caption{ {\bf Tree to parking function algorithm (adapted)}.}\label{inverse}
\begin{algorithmic}[1]
  \Statex
  \Statex{\hspace{-0.5cm}\sc algorithm}
  \Statex{\bf Input:} Spanning tree $T$ rooted at $r$ with edges directed away
  from root.
  \State $\bnv =\{r\}$
  \State $\damp = \{\,\}$
  \State $\pf=0$
  \State execute {\sc tree\_from$(r)$}
  \Statex{\bf Output:} {$\pf\colon V\setminus\{r\}\to\N$}
  \Statex \rule{0.5\textwidth}{.4pt}
  \Statex{\hspace{-0.5cm}\sc auxiliary function}
  \Function{\sc tree\_from}{$i$}
      \ForAll{$j$ in $\cN(i)$}
      \If{$j\notin\bnv$}
	\If{$(i,j)$ is an edge of $T$}
	    \State append $j$ to $\bnv$
	    \State {\sc tree\_from}$(j)$
	 \Else
	   \State $\pf(j) = \pf(j)+1$
	   \State append $(i,j)$ to $\damp$
	 \EndIf
      \EndIf
    \EndFor
  \EndFunction
  \end{algorithmic}
\end{algorithm}
}

\begin{proposition}\label{bij}
The $\cG$-parking functions are in bijection with the spanning \emph{arborescences} (the directed rooted trees with edges pointing away from the root) of $\cG$ that are rooted in $0$.

\end{proposition}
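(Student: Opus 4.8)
The plan is to exhibit the two algorithms as mutually inverse maps. Write $\phi$ for the map sending a function $\pf$ to the multiset $\tree$ produced by the $\dfs$-burning algorithm (Algorithm~\ref{dfs-burning}), and $\psi$ for the map sending a spanning arborescence $T$ of $\bG$ rooted at $0$ to the function produced by Algorithm~\ref{inverse}. I would show that $\phi$ carries $\cG$-parking functions to such arborescences, that $\psi$ carries such arborescences back to $\cG$-parking functions, and that both $\psi\circ\phi$ and $\phi\circ\psi$ are the identity, so that $\phi$ is the desired bijection.

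First I would check that $\phi$ is well defined on $\cG$-parking functions. By Proposition~\ref{algor}, if $\pf$ is a $\cG$-parking function then the $\dfs$-burning algorithm burns every vertex, so each $i\in[n]$ enters $\bnv$ exactly once, and at that moment exactly one arc $(k,i)$ is appended to $\tree$. Thus in $\tree$ every vertex of $[n]$ has in-degree $1$ while $0$ has in-degree $0$; since (as already observed in the proof of Proposition~\ref{algor}) the inserted arcs form an acyclic submultigraph, $\tree$ is a spanning arborescence of $\bG$ rooted at $0$.

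The heart of the argument is to compare the two executions on matching inputs. Fix a spanning arborescence $T$ of $\bG$ rooted at $0$, run Algorithm~\ref{inverse} on $T$ to obtain $\pf_T=\psi(T)$, and then run the $\dfs$-burning algorithm on $\pf_T$. Proceeding by induction on the steps of the depth-first traversal, and using that both algorithms scan the same neighbour lists $\cN(\cdot)$ in the same order, I would prove that the two runs maintain identical $\bnv$ and identical recursion state. The only place a divergence could occur is at an examined arc $(i,j)$ with $j\notin\bnv$: Algorithm~\ref{inverse} burns $j$ exactly when $(i,j)\in T$, whereas the $\dfs$-burning algorithm burns $j$ exactly when the current value of $\pf_T(j)$ equals $0$. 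These conditions agree because $\pf_T(j)$ is, by construction, the number of non-tree arcs into $j$ scanned while $j$ is still unburned, and every such arc is necessarily scanned before the unique tree arc into $j$ (once $j$ is burned, later arcs into $j$ are skipped by the $j\notin\bnv$ test); hence the counter reaches $0$ precisely at the step where that tree arc is scanned, and is strictly positive beforehand. It follows that the $\dfs$-burning run reproduces the traversal of $T$, so it burns every vertex --- whence $\pf_T$ is a $\cG$-parking function by Proposition~\ref{algor} --- and returns $\tree=T$; that is, $\phi\circ\psi=\mathrm{id}$.

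For the reverse composition I would run the two algorithms in the opposite order: starting from a $\cG$-parking function $\pf$, the $\dfs$-burning algorithm decrements $\pf(j)$ once for each dampened arc into $j$ and burns $j$ when the counter hits $0$, so the number of dampened arcs into $j$ equals the original value $\pf(j)$. Feeding the resulting arborescence $\phi(\pf)$ into Algorithm~\ref{inverse} then replays the same traversal and recovers exactly these counts, giving $\psi(\phi(\pf))=\pf$. The main obstacle is precisely the step-by-step synchronisation of the two depth-first traversals: one must argue carefully that the burning order, and hence the set of dampened arcs seen before each vertex is burned, is identical in both runs, so that ``$\pf_T(j)$ has been decremented to $0$'' coincides with ``the tree arc into $j$ is being scanned''. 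Once this synchronisation is established, the remaining verifications are routine bookkeeping.
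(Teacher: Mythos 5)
Your proposal is correct and takes essentially the same approach as the paper: the paper's proof (only two sentences long) likewise uses the $\dfs$-burning output $\tree$ as the forward map and Algorithm~\ref{inverse} as its inverse, deferring to Perkinson, Yang and Yu \cite{PYY} for exactly the traversal-synchronisation details you spell out. No gap to report.
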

\begin{proof}
It is easy to see that $\tree$ defines a unique arborescence in all cases. For the inverse, following Perkinson, Yang and Yu \cite{PYY}, consider the Algorithm~\ref{inverse}.
\end{proof}

Define as usual  the Laplacian matrix $L=L(\cG)$ by
$$L_{ij}=\begin{cases} -m_{ij},& \text{if $i\neq j$}\\
\mathrm{outdeg}(i)-m_{ii},&\text{if $i=j$}
\end{cases}$$
where $m_{ij}$ is the number of arcs of form $(i,j)$ and $\mathrm{outdeg}(i)$ is the number of arcs with head $i$.

\begin{corollary}\label{cor}
Given a directed multigraph $\cG$ on $[n]$, the number of $\cG$-parking functions is the determinant of the matrix $L_0$ obtained from the Laplacian $L=L(\cG)$
by deleting the row and the column corresponding to $0$.
\end{corollary}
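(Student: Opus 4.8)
The plan is to read the statement as a combination of two facts: Proposition~\ref{bij}, which gives a bijection, and the directed (weighted) Matrix--Tree theorem, which turns the count of arborescences into a determinant. By Proposition~\ref{bij} the $\cG$-parking functions are in bijection with the spanning arborescences of $\bG$ rooted at $0$ with all edges directed away from the root. Hence the number of $\cG$-parking functions equals the number of such arborescences, and the whole task reduces to identifying that number with $\det L_0$.

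For the determinant I would invoke the directed Matrix--Tree theorem of Tutte in its out-arborescence form: for a directed multigraph, the number of spanning arborescences rooted at a fixed vertex $v$ and oriented \emph{away} from $v$ equals the determinant of the matrix obtained from $\Delta^{\mathrm{in}}-A$ by deleting the row and column indexed by $v$, where $\Delta^{\mathrm{in}}$ carries the in-degrees on its diagonal and $A$ is the matrix of arc multiplicities. Applying this to $\bG$ with $v=0$, the number of arborescences rooted at $0$ is the determinant of the reduced Laplacian of $\bG$ at $0$. The decisive observation is that this in-degree Laplacian is \emph{exactly} the matrix $L$ defined just before the corollary: the diagonal entry is ``the number of arcs with head $i$'' (which is what the text writes as $\mathrm{outdeg}(i)$) minus the loop term $m_{ii}$, and the off-diagonal entry is $-m_{ij}$. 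Since deleting the $0$-row and $0$-column removes precisely the entries created by adjoining the sink vertex $0$, the resulting $n\times n$ matrix is $L_0$, and the theorem gives $\#\{\text{arborescences}\}=\det L_0$.

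The remaining work is bookkeeping, and that is also where I expect the only real obstacle: keeping the orientation conventions consistent. One must make sure that ``arborescences directed away from $0$'' is paired with the in-degree Laplacian (not the out-degree one), and that the arc reversal built into the definition of $\bG$ is accounted for when matching entries; if one instead writes the multiplicities in the orientation of $\cG$, the reduced matrix appears transposed, but this is harmless because $\det L_0=\det L_0^{\mathsf T}$. I would also remark explicitly that Tutte's theorem is valid for multigraphs — the multiplicities $m_{ij}$ enter the Laplacian linearly — so the repeated arcs of the form $(j,1)$ that occur for the arrangements $\cA^X$ present no difficulty. With these checks in place the identity $\det L_0=\#\{\cG\text{-parking functions}\}$ follows immediately from Proposition~\ref{bij}.
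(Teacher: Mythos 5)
Your proposal is correct and is essentially the paper's own proof: the paper likewise cites Proposition~\ref{bij} together with Kirchhoff's Matrix Tree Theorem for directed multigraphs (Stanley, Theorem~5.6.4), which is the same statement as Tutte's out-arborescence form that you invoke. Your additional bookkeeping --- pairing arborescences directed away from $0$ with the in-degree Laplacian of $\bG$ and checking the multigraph case --- is exactly the convention-matching the paper leaves implicit.
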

\begin{proof}
This is a direct consequence of Proposition~\ref{bij} and of Kirchhoff's Matrix Tree Theorem for directed multi-graphs (Cf. \cite[Theorem 5.6.4.]{Stanv2}, for example).
\end{proof}

\begin{theorem}\label{bij2}
For every natural $n\geq3$ and $1\leq k< n$,
the Pak-Stanley labeling considered above defines a bijection between the regions of $\cA^{(k,n)}$ and the $\cG^{(k,n)}$-parking functions.
\end{theorem}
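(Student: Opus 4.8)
The plan is to reduce the bijectivity to a single numerical coincidence and then evaluate one determinant. By Mazin's theorem \cite{Mazin} quoted above, the Pak--Stanley labeling is a well-defined map from the regions of $\cA^{(k,n)}$ \emph{onto} the set of $\cG^{(k,n)}$-parking functions, so it is surjective onto that set. By Theorem~\ref{teor1.1} together with Zaslavsky's theorem \cite{Zas}, the domain has exactly $r(\cA^{(k,n)})=(n+1)^{n-1}$ elements. Since a surjection between finite sets of equal cardinality is automatically a bijection, the whole statement follows once I show that the number of $\cG^{(k,n)}$-parking functions is also $(n+1)^{n-1}$. By Corollary~\ref{cor} this number is $\det L_0$, so everything reduces to the identity $\det L_0=(n+1)^{n-1}$.

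First I would write down $L_0$ explicitly; its rows and columns are indexed by $[n]$. As observed just before \eqref{fcor}, each $i\in[n]$ is the head of exactly $n$ arcs of $\overline{\cG^{(k,n)}}$, so every diagonal entry equals $n$, while the off-diagonal entry $(L_0)_{ij}$ is minus the multiplicity of the arc $(i,j)$ in $\overline{\cG^{(k,n)}}$, i.e.\ minus the number of arcs $(j,i)$ of $\cG^{(k,n)}$. Tracing the definition of $\cG^{(k,n)}$ back to the hyperplanes, an arc $(j,i)$ of $\cG^{(k,n)}$ arises from $C_{ji}$ when $j<i$, from $S_{ij}$ when $i<j$ and $i\in(k,n)$, and from some $I_{ij}$ only when $i=1$. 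The crucial structural observation is therefore that, away from vertex $1$, the Coxeter arcs are the only ones running from a smaller to a larger label, whereas the Shi arcs run from larger to smaller and the Ish arcs all point into $1$.

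The evaluation itself is a short elimination. Each column of the full Laplacian of $\overline{\cG^{(k,n)}}$ sums to $0$, and since the deleted vertex $0$ sends a single arc to each $j$, every column of $L_0$ sums to $1$. Hence adding rows $2,\dots,n$ to the first row turns it into $(1,1,\dots,1)$ without changing the determinant; subtracting column $1$ from every other column then makes the first row $(1,0,\dots,0)$, and expanding along it gives $\det L_0=\det M'$, where $M'$ is the $(n-1)\times(n-1)$ matrix on $\{2,\dots,n\}$ with $M'_{ij}=(L_0)_{ij}+1$ (using that $(L_0)_{i1}=-1$ for every $i\geq 2$). By the observation above, for $i>j\geq2$ the only arc $(j,i)$ of $\cG^{(k,n)}$ is the Coxeter arc, so $(L_0)_{ij}=-1$ and $M'_{ij}=0$; on the diagonal $M'_{ii}=n+1$; and the entries strictly above the diagonal, namely $1-[\,i\in(k,n)\,]$, carry all the $k$-dependence but are irrelevant. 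Thus $M'$ is upper triangular and $\det L_0=\det M'=(n+1)^{n-1}$, independently of $k$, which is exactly what is needed.

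This elimination is the heart of the matter, and the main place where care is required is the bookkeeping of orientations: one must pass correctly between $\cA^{(k,n)}$, the digraph $\cG^{(k,n)}$, its reversal $\overline{\cG^{(k,n)}}$, and the directed matrix--tree theorem underlying Corollary~\ref{cor}, since a single transposition or sign slip would spoil the triangular pattern. Once the entries of $L_0$ are pinned down, recognizing the triangularity after the two elementary operations is immediate, and the equality $\det L_0=(n+1)^{n-1}=r(\cA^{(k,n)})$ upgrades the surjection to the desired bijection.
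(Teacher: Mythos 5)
Your proposal is correct: the outer reduction (Mazin's surjectivity, the region count from Theorem~\ref{teor1.1} and Zaslavsky, and the identification of the number of $\cG^{(k,n)}$-parking functions with $\det L_0$ via Corollary~\ref{cor}) is exactly the paper's, and your determinant evaluation is sound --- the diagonal entries are $n$, the entries in column $1$ and all below-diagonal entries are $-1$, the columns sum to $1$, and your two elementary operations do produce an upper triangular minor with $n-1$ diagonal entries equal to $n+1$. However, this evaluation is not the one in the paper's own proof of Theorem~\ref{bij2}: there, the authors never compute $\det\cM^{(k,n)}$ from scratch, but instead factor $\cM^X=\cM^{(1,n)}\cdot(I_n+M_{x_1})\dotsb(I_n+M_{x_k})$ for any $X\subseteq(1,n)$, where the $I_n+M_\ell$ are determinant-one elementary matrices, so that $\det\cM^X$ equals the known Shi value $(n+1)^{n-1}$. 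Your elimination coincides, essentially verbatim, with the \emph{alternative} proof the paper records in Remark~\ref{krat} (attributed to Krattenthaler); the only cosmetic difference is that you subtract column $1$ from every other column, while the remark subtracts consecutive columns, and both hinge on the same two facts (unit column sums and $-1$'s below the diagonal). What your route buys is self-containedness: no prior knowledge of the Shi determinant is needed, and the $k$-dependence is visibly confined to the irrelevant above-diagonal entries, so the argument applies unchanged to every $X\subseteq(1,n)$. What the paper's factorization buys is the sharper structural statement that all the matrices $\cM^X$ differ from the Shi matrix by right multiplication by unipotent matrices, which explains the coincidence of the counts by relating each intermediate arrangement directly to the Shi case rather than by recomputing a determinant.
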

\begin{proof}
The surjectivity of the labeling was established by Mazin \cite[Theorem 3.1]{Mazin}; by Corollary~\ref{cor} all we have to do is to show that the determinant of the matrix $\cM^{(k,n)}$, obtained from the Laplacian of $\overline{\cG^{(k,n)}}$ by deleting the row and the column corresponding to $0$, is the number $(n+1)^{n-1}$ of regions of $\cA^{(k,n)}$, according to Theorem~\ref{teor1.1}.
We now show that all matrices of form $\cM^X$ with $X\subseteq(1,n)$ may be obtained from $\cM^{(1,n)}$ (corresponding to $\Shi_n$) by multiplication on the right by a unitary matrix. 

Suppose that $(1,\ell+1)\subseteq X\subseteq(1,n)$ and so
the entries of the first $\ell$ columns of $\cM^X$ 
are either $n$, if they belong to the main diagonal of $\cM^X$, or $-1$. Note that
$\cM^X$ and $\cM^{X\setminus\{\ell\}}$  differ in that, from the first  to the second matrices, in column $j>\ell$
the entry of the first line is decreased by $1$ whereas  the entry of line $\ell$  is increased by $1$. 
Hence,  being $M_\ell=\big(m_{ij}\big)_{1\leq i,j\leq n}$ such that
\begin{align*}&m_{ij}=\begin{cases} -\frac{1}{n+1},&\text{if $i=1$ and $j>\ell$;}\\\frac{1}{n+1},&\text{if $i=\ell$ and $j>\ell$;}\\0,&\text{otherwise,}\end{cases}\\[10pt]
&\cM^{X\setminus\{\ell\}}=\cM^X\cdot (I_n+M_\ell)
\end{align*}
where as usual $I_n$ is the identity matrix of order $n$. Note also that $A\cdot M_\ell=\bz$ if $A=\big(a_{ij}\big)_{1\leq i,j\leq n}$ is such that $a_{ij}=0$ whenever
$1\leq i,j\leq\ell$.
Thus, if $X=(1,n)\setminus\{x_1,\dotsc,x_k\}$ with $x_1>\dotsb>x_k$,
\begin{align*}
\cM^X&=\cM^{(1,n)}\cdot (I_n+M_{x_1})\dotsb(I_n+M_{x_k})\\
&=\cM^{(1,n)}\cdot (I_n+M_{x_1}+\dotsb+M_{x_k})
\end{align*}
the last equality since  $M_{x_i}M_{x_j}=0$ if $i>j$ and hence $M_{x_{i_1}}M_{x_{i_2}}\!\dotsb\!M_{x_{i_m}}=0$ for $i_1>i_2>\dotsb>i_m$.
In particular, $$\det(\cM^X)=\det(\cM^{(1,n)})=(n+1)^{n-1}\,.\qedhere$$
\end{proof}

\begin{example}
Going back to the Shi and Ish arrangements in dimension $3$, we have the following Laplacians.
$$L\big(\cG^{(0,n)}\big)=\text{\footnotesize
$\bordermatrix{~&0&1&2&3\cr0&0&-1&-1&-1\cr1&0&3&-1&-1\cr2&0&-1&3&-1\cr3&0&-1&-1&3\cr}$}\ ,\ 
L\big(\cG^\varnothing\big)=\text{\footnotesize
$\bordermatrix{~&0&1&2&3\cr0&0&-1&-1&-1\cr1&0&3&-1&-2\cr2&0&-1&3&0\cr3&0&-1&-1&3\cr}$}\,.$$
Note that $
\left(\!\begin{smallmatrix}\stt3&-1&-2\\\stt-1&3&0\\\stt-1&-1&3\end{smallmatrix}\!\right)=
\left(\!\begin{smallmatrix}\stt3&-1&-1\\\stt-1&3&-1\\\stt-1&-1&3\end{smallmatrix}\!\right)\cdot
\left(\!\begin{smallmatrix}\stt1&0&-1/4\\\stt0&1&1/4\\\stt0&0&1\end{smallmatrix}\!\!\right)$\,.
\end{example}

\begin{remark}\label{krat}
For an alternative proof, note that, for every $X\subseteq(1,n)$, $1$ is the sum of the entries of any column of $\cM^X$ and the elements below the main diagonal are all equal to $-1$. Hence, by adding up all rows in the first row, by subtracting each column to the column ahead and by expanding the determinant with respect to the first row (now reduced to $10\dotsb0$) we obtain the determinant of a triangular matrix with $n-1$ diagonal entries equal to $n+1$.
\end{remark}

We may now characterize the labels of the Pak-Stanley labeling of the Ish arrangement (the Ish-parking functions), again based on the $\dfs$-burning algorithm.

\begin{definition}\label{def.crc}
Let $\ba=(a_1,\dotsc,a_n)\in\N_0^n$.  The \emph{reverse center of $\ba$},
$\tZ(\ba)$, is the largest subset $X = \{ x_1, \ldots, x_\ell \}$ of $[n]$  with $\ n\geq x_1 > \cdots > x_\ell\geq1\ $ 
with the property that $a_{x_i} < i$ for every $i \in [\ell]$.
Note that if this property holds for both  $X,Y\subseteq[n]$, then it holds for $X\cup Y$, and so this concept is well-defined.
\end{definition}

\begin{lemma}\label{Ishlabels}
Let $\bnv=\langle v_0\!=\!0,v_1,\dotsc,v_k\rangle$ be the list defined by the $\dfs$-burning algorithm
with the lists $\cN(i)$ sorted in descending order for $i=0,\dotsc,n$,
applied to $\cG^\varnothing$ with input $\pf\colon[n]\to\{0,1,\dotsc,n-1\}$, at the end of the execution.
Then the following statements are equivalent:
\begin{enumerate}
\setlength{\itemindent}{1em}
\item[(\thetheorem.1)] For some $1\leq\ell\leq k$, $v_\ell=1$;
\item[(\thetheorem.2)] $1\in\tZ(\ba)$;
\item[(\thetheorem.3)] as a set, $\bnv=\{0\}\cup[n]$.
\end{enumerate}
\end{lemma}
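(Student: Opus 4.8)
The plan is to read the burnt set $\bnv$ directly off the arc structure of $\bG=\overline{\cG^\varnothing}$ and to match it, vertex by vertex, with the greedy construction of the reverse center. First I would record the shape of $\bG$: for $v\in\{2,\dots,n\}$ the neighbour list is $\cN(v)=\langle v-1,v-2,\dots,1\rangle$ (the reversed Coxeter arcs), the list $\cN(1)$ consists of each $j\in\{2,\dots,n\}$ repeated $j-1$ times (the reversed Ish arcs), and $\cN(0)=\langle n,n-1,\dots,1\rangle$. The decisive structural feature is that \emph{every arc among $\{0,2,3,\dots,n\}$ runs from a larger vertex to a smaller one} (with $0$ largest), while vertex $1$ is the unique vertex whose out-arcs point upward. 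In particular the sources of offers reaching a vertex $j\ge 2$ are exactly $0$, the larger vertices $j+1,\dots,n$, and vertex $1$ (with multiplicity $j-1$); the sources reaching vertex $1$ are $0$ and all of $2,\dots,n$.

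Next I would use that the algorithm eventually iterates through the \emph{entire} list $\cN(v)$ of every burnt vertex $v$, so that a vertex $j$ is burnt if and only if the total number of arcs into $j$ issuing from burnt vertices (with multiplicity) is at least $\pf(j)+1$. Writing $a_i=\pf(i)$ and keeping vertex $1$ unburnt for the moment, the burnt status of $j\ge2$ then depends only on that of the \emph{larger} vertices, so scanning $j=n,n-1,\dots,2$ defines a set $G$ by the rule $j\in G \iff a_j\le b_j$, where $b_j=\#\bigl(G\cap\{j+1,\dots,n\}\bigr)$. Setting $c=|G|$, vertex $1$ receives exactly $1+c$ offers from $\{0\}\cup G$, so the algorithm burns $1$ precisely when $a_1\le c$, i.e. the first assertion $v_\ell=1$ holds iff $a_1\le c$. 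To connect this with the second assertion I would prove that $\tZ(\ba)$ coincides with the greedy set obtained by scanning $j=n,\dots,1$ and selecting $j$ whenever $a_j$ is at most the number of larger indices already selected: such a greedy set plainly has the defining property of Definition~\ref{def.crc}, hence is contained in $\tZ(\ba)$, and the reverse inclusion follows by examining the largest element of $\tZ(\ba)$ not selected and using that the defining property is closed under unions. Comparing the two recursions gives $c=\#\bigl(\tZ(\ba)\cap\{2,\dots,n\}\bigr)$ and the equivalence $a_1\le c \iff 1\in\tZ(\ba)$, which is the first-equivalent-to-second step.

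For the implication from $v_\ell=1$ to $\bnv=\{0\}\cup[n]$ I would argue by contradiction: if vertex $1$ is burnt but some vertex of $\{2,\dots,n\}$ is not, let $j$ be the largest unburnt vertex. Then $0$, vertex $1$, and all of $j+1,\dots,n$ are burnt, so the number of arcs into $j$ from burnt vertices is $1+(n-j)+(j-1)=n\ge a_j+1$ --- this is exactly where the hypothesis $\pf([n])\subseteq\{0,\dots,n-1\}$ enters --- forcing $j$ to be burnt, a contradiction. The reverse implication is immediate, since $1\in[n]$.

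The step I expect to be the main obstacle is making the ``deferred vertex $1$'' analysis fully rigorous: one must justify that the set burnt \emph{without the help of} vertex $1$ is exactly $G$, and that when $a_1>c$ no further burning is possible (the vertices of $\{2,\dots,n\}\setminus G$ and vertex $1$ are mutually deadlocked). This rests on the order-independence of the final burnt set, which I would obtain from monotonicity of the burning rule: burning additional vertices only increases the offers available to the rest, so the burnt set is the least fixpoint of $B\mapsto B\cup\{\,j : \#\{v\in B : (v,j)\in\bA\}\ge\pf(j)+1\,\}$ started from $\{0\}$, independent of the traversal order (as is implicit in Proposition~\ref{algor}). It then suffices to check that the only two fixpoints compatible with the structure above are $\{0\}\cup G$ (vertex $1$ unburnt) and $\{0\}\cup[n]$ (vertex $1$ burnt), and that these occur exactly according to whether $a_1\le c$.
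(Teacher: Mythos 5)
Your proof is correct, but it takes a genuinely different route from the paper's. The paper argues directly on the run of the $\dfs$-burning algorithm: for (\ref{Ishlabels}.1)$\Rightarrow$(\ref{Ishlabels}.2) it notes that every offer reaching a vertex before $1$ is burnt must come from $0$ or from a \emph{larger} burnt vertex, so the vertices burnt up to and including $1$ form --- after sorting in decreasing order --- a chain witnessing $1\in\tZ(\ba)$; then (\ref{Ishlabels}.2)$\Rightarrow$(\ref{Ishlabels}.3) is exactly your maximal-unburnt-vertex count (a largest unburnt $j$ would receive $1+(n-j)+(j-1)=n$ offers, contradicting \eqref{fcor}), and (\ref{Ishlabels}.3)$\Rightarrow$(\ref{Ishlabels}.1) is trivial. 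You instead prove (\ref{Ishlabels}.1)$\Leftrightarrow$(\ref{Ishlabels}.2) by making the burnt set canonical: you show it is the least fixpoint of the burning rule, hence independent of the traversal order, compute it explicitly as $\{0\}\cup G$ or $\{0\}\cup[n]$, where $G$ is produced by the top-down greedy recursion on $\{2,\dotsc,n\}$ and $c=|G|$, and identify $\tZ(\ba)$ with the output of the same greedy recursion, so that both (\ref{Ishlabels}.1) and (\ref{Ishlabels}.2) reduce to the single inequality $a_1\le c$. Your longer route buys three things: the hypothesis that the lists $\cN(i)$ are sorted in descending order becomes superfluous (the equivalences hold for any ordering); you obtain explicit descriptions of both the burnt set and the reverse center (essentially the description the paper later exploits in Section~4 to count $\IPF^k_n$); and you bypass two delicate points in the paper's terse argument, namely the claim that the burn sequence before $1$ is itself decreasing, which is false as literally stated even with descending lists (for $n=5$ and $\pf=(4,0,2,1,0)$ the algorithm burns $5,2,4,3$ and then $1$; only the \emph{sorted} burnt set yields the witness chain), and the tacit use in (\ref{Ishlabels}.2)$\Rightarrow$(\ref{Ishlabels}.3) of the fact that vertex $1$ is burnt, which requires the top-down burning of a witness chain that you carry out explicitly in your fixpoint analysis. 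What the paper's approach buys is brevity: it reads the witness chain for $\tZ(\ba)$ off the execution in one sentence, with no need for order-independence, the greedy characterization of Definition~\ref{def.crc}, or your deadlock argument.
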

\begin{proof}$\ $\\[-18pt]
\begin{enumerate}
{\setlength{\itemindent}{1em}
\item[(\thetheorem.1)]$\!\implies$(\thetheorem.2). Note that $j<i$ if $j\in\cN(i)$ and $i\neq 0,1$. Hence, $v_1>v_2>\dotsb>v_{\ell-1}$ and $a(v_{i_m})+1\leq m$ because both the dampened edges and the tree edge leading to $v_{i_m}$ must have origin $v_{i_p}$ with $0\leq p<m$.
\item[(\thetheorem.2)]$\!\implies$(\thetheorem.3). Suppose that $1$ belongs to the reverse center of $\pf$ but there is a greatest element $j\in[n]$ not in $\bnv$. Then, during the execution of the algorithm (more precisely, during the execution of Line~14)
the value of $\pf(j)$ has decreased once for $i=0$ (that is, as a neighbor of $0$), once for each value of $i>j$ (in a total of $n-j$), and $j-1$ times for $i=1$, and is still greater than zero. Hence $\pf(j)>n$, a contradiction with \eqref{fcor}.
\item[(\thetheorem.3)]$\!\implies$(\thetheorem.1). Obvious.\qedhere}
\end{enumerate}
\end{proof}

\begin{proposition}\label{prop.ish}
A function $\ba\colon[n]\to\N_0$ is an \emph{Ish-parking function} if and only if
$1$ belongs to the reverse center of $\pf$.
\end{proposition}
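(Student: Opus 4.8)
The plan is to read the statement off directly from Lemma~\ref{Ishlabels} together with Proposition~\ref{algor}. By definition, an \emph{Ish-parking function} is exactly a $\cG^\varnothing$-parking function (the case $X=\varnothing$), and Proposition~\ref{algor} tells us that $\ba$ is a $\cG^\varnothing$-parking function if and only if $\ba$ \emph{fits} $\cG^\varnothing$, that is, the $\dfs$-burning algorithm terminates with \emph{all} vertices burned. The latter is precisely condition (\ref{Ishlabels}.3) of Lemma~\ref{Ishlabels}, namely that $\bnv=\{0\}\cup[n]$ as a set. So the whole proposition is a chain of equivalences whose combinatorial core is already packaged in the lemma.

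Before invoking Lemma~\ref{Ishlabels} I would dispose of an ordering issue. The lemma is stated for the run of the algorithm in which every neighbor list $\cN(i)$ is sorted in descending order, whereas the notion of fitting ought not to depend on this choice. As noted in the remark following Example~\ref{expf}, the order in which the lists $\cN(i)$ are traversed affects only $\tree$ and the submultigraph $\cS$, not whether the algorithm ends with all vertices burned. Hence $\ba$ fits $\cG^\varnothing$ if and only if the descending-order run produces $\bnv=\{0\}\cup[n]$, and we may apply Lemma~\ref{Ishlabels} without loss of generality. It then remains only to string the equivalences together: $\ba$ is an Ish-parking function $\iff$ condition (\ref{Ishlabels}.3) holds $\iff$ condition (\ref{Ishlabels}.2) holds, i.e.\ $1\in\tZ(\ba)$, and this last is exactly the assertion that $1$ belongs to the reverse center of $\ba$ in the sense of Definition~\ref{def.crc}.

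Because Lemma~\ref{Ishlabels} carries essentially all of the work, I do not anticipate a serious obstacle; the proof is short and almost entirely bookkeeping. The one genuine point requiring care is the order-independence invoked above, which is what lets the descending-order hypothesis of Lemma~\ref{Ishlabels} be harmless. I would also be explicit that ``$\ba$ fits $\cG^\varnothing$'' and ``$\bnv=\{0\}\cup[n]$ as a set'' are literally the same condition, so that the passage from Proposition~\ref{algor} to Lemma~\ref{Ishlabels} is seamless and no gap is hidden in the identification of fitting with condition (\ref{Ishlabels}.3).
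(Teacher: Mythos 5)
Your proof is correct and follows essentially the same route as the paper's, whose entire proof is the one-liner ``Follows immediately from Lemma~\ref{Ishlabels}'': the implicit chain there is exactly your chain, namely Ish-parking function $=$ $\cG^\varnothing$-parking function, then Proposition~\ref{algor} to identify this with fitting (condition~(\ref{Ishlabels}.3)), then the lemma's equivalence with $1\in\tZ(\ba)$. Your explicit handling of the order-independence of fitting (which the paper relegates to the remark after Example~\ref{expf}) is a welcome extra precaution, not a divergence in method.
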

\begin{proof} Follows immediately from Lemma~\ref{Ishlabels}.\end{proof}

We write $\IPF_n$ for the set of Ish-parking functions of length $n$ and $\IPF^k_n$ for the set of those of reverse center of size $k$.
\newcommand{\RW}{\mathsf{RW}}
\newcommand{\bas}{\mathbf{a^*}}

\section{The number of Ish-parking functions with reverse center of a given length}

In this section we prove that there are as many Ish-parking functions with \emph{reverse center of a given length} as parking functions
with \emph{center} of the same length, as defined in a previous paper of ours \cite{DGO}.
Recall that \emph{centers} occur in relation with the original Pak-Stanley labeling of the Shi arrangement, namely for the determination of the region labeled with a given parking function \cite{DGO}. See the Appendix for more details.
This number was studied in another article \cite{DGO2}. We briefly recall here some notation and some results thereof.

Throughout this section, $\ba=a_1\dotsb a_n\in\{0,1,\dotsc,n-1\}^n$ is a generic element;
denote the length of the reverse center of $\ba$ by $\tz(\ba):=|\tZ(\ba)|$,
let $\bas=\ba+1\dotsb1$
and $\PF^*_n=\big\{\bas\mid\ba\in\PF_n\big\}$.
The set $\PF^*_n$ is the set of parking functions as defined on our previous papers \cite{DGO,DGO2}, where we defined $z(\bas)$ as the length of the center $Z(\bas)$. We recall that, for any $a \in [n]^n$, the center of $\ba$ is the largest subset $X = \{ x_1, \ldots, x_\ell \}$ of $[n]$ such that $1 \leq x_1 < \cdots < x_\ell \leq n$ and $a_{x_i} \leq i$ for every $i \in [\ell]$.  Then, by definition, $z(\bas)=\tz(a_n\dotsb a_1)$.
We consider the enumerators
$$\ZP_n(t)=\sum_{\ba\in\PF_n} t^{z(\bas)}\ ,\quad\ZI_n(t)=\sum_{\ba\in\IPF_n} t^{\tz(\ba)}\,.$$

\begin{theorem}[{\cite[Cf. Theorems 2.1 and 5.1]{DGO2}}]\label{conta}
For every integers $1\leq r\leq n$,
\begin{align*}
[t^r]\big(\ZP_n(t)\big)&= r! \sum_{i_1 + \dotsb + i_r = n-r}
(n-1)^{i_1} (n-2)^{i_2} \dotsb (n-r)^{i_r}\\ &= r\ \sum_{j=0}^{r-1}
(-1)^j \binom{r-1}{j} (n-1-j)^{n-1}\,.
\end{align*}
\end{theorem}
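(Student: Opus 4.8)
The plan is to prove the two displayed equalities separately. The first is the genuine combinatorial assertion that the number $[t^r]\big(\ZP_n(t)\big)$ of parking functions of length $n$ whose center has length $r$ equals $r!\sum_{i_1+\dots+i_r=n-r}\prod_{s=1}^r(n-s)^{i_s}$, while the second is a closed-form evaluation of that multinomial sum, which I would handle as a symmetric-function identity. A convenient first step for the combinatorial side is to reformulate the center statistic as a left-to-right scan: for $\bas\in[n]^n$ the greedy description of $Z(\bas)$ (take at each level the leftmost admissible index, which dominates coordinatewise and hence is of maximal length) shows that $z(\bas)$ is obtained by initializing a level $\ell=0$, reading $a^*_1,\dots,a^*_n$ in order, and incrementing $\ell$ exactly when $a^*_x\le \ell+1$; the terminal value of $\ell$ is $z(\bas)$. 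In this picture the center positions are precisely the increment positions, an $i$-th increment $x_i$ satisfies $a^*_{x_i}\le i$ (so $i$ admissible values), and a non-center position at level $\ell$ satisfies $a^*_x\ge\ell+2$.

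For the first equality I would recognize $\sum_{i_1+\dots+i_r=n-r}\prod_{s=1}^r(n-s)^{i_s}$ as the complete homogeneous symmetric polynomial $h_{n-r}(n-1,n-2,\dots,n-r)$, so that the target count is $r!\,h_{n-r}(n-1,\dots,n-r)$, and then produce either an explicit bijection or a recursion on $n$ matching each parking function of center length $r$ with an ordering of its $r$ center elements (the $r!$ factor) together with a weighted record of the $n-r$ non-center coordinates. The hard part will be the parking-function constraint itself: the scan above is blind to it, and small cases already show that it prunes many sequences that the scan alone would accept — for instance $(0,2,2)$ with $n=3$ would scan to center $1$ but is not a parking function, whereas $(1,0,2),(1,1,0),(1,2,0),(2,1,0)$ are exactly the four parking functions of length $3$ with center $1$, matching $1!\,h_2(2)=4$. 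Consequently the weights $n-s$ cannot be read off position by position, and I would resolve this by a recursion mirroring the standard peeling $h_{d}(x_1,\dots,x_r)=h_{d}(x_1,\dots,x_{r-1})+x_r\,h_{d-1}(x_1,\dots,x_r)$, isolating a single distinguished coordinate (e.g.\ the last increment or the position of the topmost value $n-1$) and arguing that deletion carries parking functions of length $n$ with center $r$ to those of length $n-1$ in a controlled way; verifying that the parking-function condition is preserved and correctly weighted under this operation is where the real work lies.

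For the second equality I am confident the cleanest route is partial fractions: $h_{n-r}(n-1,\dots,n-r)$ is the coefficient of $z^{n-r}$ in $\prod_{s=1}^r\bigl(1-(n-s)z\bigr)^{-1}$, and expanding
\begin{align*}
\prod_{s=1}^r\frac{1}{1-(n-s)z}&=\sum_{s=1}^r\frac{A_s}{1-(n-s)z},\qquad A_s=\prod_{t\ne s}\frac{n-s}{t-s}=\frac{(n-s)^{r-1}}{(-1)^{s-1}(s-1)!\,(r-s)!},\\
h_{n-r}(n-1,\dots,n-r)&=\sum_{s=1}^r A_s\,(n-s)^{n-r}=\sum_{s=1}^r\frac{(-1)^{s-1}(n-s)^{n-1}}{(s-1)!\,(r-s)!}.
\end{align*}
Multiplying by $r!$ and using $\tfrac{r!}{(s-1)!(r-s)!}=r\binom{r-1}{s-1}$, then substituting $j=s-1$, gives $r\sum_{j=0}^{r-1}(-1)^j\binom{r-1}{j}(n-1-j)^{n-1}$, exactly the second expression. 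As a global consistency check I would confirm that $\ZP_n(1)=\sum_{r=1}^{n}[t^r]\big(\ZP_n(t)\big)$ collapses to $(n+1)^{n-1}$, the total number of parking functions, which both validates the closed forms and pins down the normalization of the combinatorial argument.
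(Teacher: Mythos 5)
Your treatment of the second equality is correct and complete: recognizing the inner sum as the complete homogeneous symmetric polynomial $h_{n-r}(n-1,\dotsc,n-r)$, the partial-fraction coefficients $A_s=(n-s)^{r-1}\big/\big((-1)^{s-1}(s-1)!\,(r-s)!\big)$, and the final substitution $j=s-1$ all check out, so the identity $r!\,h_{n-r}(n-1,\dotsc,n-r)=r\sum_{j=0}^{r-1}(-1)^j\binom{r-1}{j}(n-1-j)^{n-1}$ is genuinely proved. The gap is in the first equality, which is the substantive claim of the theorem: that the number of $\ba\in\PF_n$ with $z(\ba^*)=r$ equals $r!\,h_{n-r}(n-1,\dotsc,n-r)$. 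Here you offer only a strategy: the greedy-scan description of the center, the correct observation that the parking condition is a global constraint invisible to the scan (your example $(0,2,2)$ makes the point well), and then an unspecified deletion/recursion argument whose key step --- that removing a distinguished coordinate carries parking functions of length $n$ with center length $r$ to shorter ones in a way that matches the peeling recursion for $h$ --- you explicitly defer as ``where the real work lies.'' No candidate deletion is defined, and nothing is verified about how it interacts with either the parking condition or the center statistic; the consistency check $\ZP_n(1)=(n+1)^{n-1}$ cannot substitute for this. So the main combinatorial assertion remains unproven.

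For comparison: the paper itself does not prove Theorem \ref{conta} at all --- it imports it from \cite{DGO2} --- and the route taken there, sketched in the remark following \eqref{Whitney}, is quite different from your plan. A product formula of this shape is immediate when the coordinate constraints decouple: that is exactly what happens in this paper for Ish-parking functions, where Proposition \ref{prop.ish} reduces membership with prescribed reverse center to independent conditions on each coordinate and yields $|\IPF^k_n|=k!\sum_{j_1+\dotsb+j_k=n-k}(n-1)^{j_1}\dotsb(n-k)^{j_k}$ with no difficulty; the analogous decoupled count applies to the rook words of \cite{LRW} with a given run. The content of the theorem for parking functions is then obtained by transfer: Theorem 3.4 of \cite{DGO2}, quoted in the paper, provides a bijection $\Phi$ of $[n]^n$ onto itself that maps $\PF^*_n$ onto $\PF^*_n$ and interchanges the center statistic with the run statistic. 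That transfer bijection is precisely the ingredient your outline is missing; a self-contained proof would require either constructing such a bijection or actually carrying out and verifying the recursion you allude to, and there is no indication that the latter is any easier than the former.
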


For a fixed subset $A = \{ i_1, i_2, \ldots, i_{k-1} \}\subseteq[n]$ with $i_1 > i_2 > \cdots > i_{k-1}>1$, let $i_k=1$ and $i_0=n+1$
and note that, by Proposition~\ref{prop.ish}, \emph{$\ba$ is an Ish-parking function with reverse center $A\cup\{1\}$}
(and  $\tz(\ba)=k$)  if and only if
\begin{itemize}
\item $(a_{i_1},a_{i_2},\dotsc,a_{i_k}\!\!=\!\!a_1) \in \{0\} \times\{0,1\}\times
  \cdots \times\{0,\dotsc,k-1\}$,
\item $a_j \in\{\ell,\dotsc,n-1\}$ if $i_{\ell} < j < i_{\ell-1}$ with $1\leq\ell \leq k$.
\end{itemize}
Graphically,
{$$\begin{array}{cccccccc}
\fbox{$a_{i_k} {\scriptstyle \leq k-1}$} &
{\neq
\begin{cases}\scriptstyle\ 0 \\[-5pt]\scriptstyle\ 1 \\[-5pt]\ \vdots \\[-5pt] \scriptstyle k-1 \end{cases}} &
&
{\scriptstyle \neq
\begin{cases} \scriptstyle 0 \\[-5pt] \scriptstyle 1 \\[-5pt] \scriptstyle 2 \end{cases}} &
\fbox{$a_{i_2} {\scriptstyle \leq 1}$} &
{\scriptstyle\neq
\begin{cases} \scriptstyle 0 \\[-5pt] \scriptstyle 1 \end{cases}} &
\fbox{$a_{i_1} {\scriptstyle \leq 0}$} &
{\scriptstyle \neq 0}\\[-5pt]
\underbracket[.5pt]{\hspace{.8cm}}_{i_k=1} &
\underline{\hspace{1.2cm}} &
 \ldots & \underline{\hspace{1.2cm}} &
\underbracket[.5pt]{\hspace{.8cm}}_{i_2} &
 \underline{\hspace{1.2cm}} &
\underbracket[.5pt]{\hspace{.8cm}}_{i_1} & \underline{\hspace{1.2cm}}
\end{array}$$}
Hence, the number of Ish-parking functions of length $n$ and reverse center of length $k$ is
$$|\IPF^k_n|=k! \sum_{\scriptscriptstyle j_1+j_2 + \dotsb + j_k=n-k} (n-1)^{j_1} (n-2)^{j_2}\dotsb (n-k)^{j_k}$$
and, by Theorem~\ref{conta},
$$\ZP_n(t)=\sum_{\ba\in\PF_n} t^{z(\bas)}=\sum_{\ba\in\PF_n} t^{\tz(\ba)}=\sum_{\ba\in\IPF_n} t^{\tz(\ba)}=\ZI_n(t) \nonumber\,,$$
the second equality since $a_1\dotsb a_n\in\PF_n$ if and only if $a_n\dotsb a_1\in\PF_n$.
Hence,
\begin{equation}
\begin{split}
|\IPF^k_n|
&= k\ \sum_{j=0}^{r-1} (-1)^j \binom{k-1}{j} (n-1-j)^{n-1}\\
&= k\ \big|\big\{f\colon[n-1]\to[n-1]\mid [k-1]\subseteq f([n-1])\big\}\big|\,.
\end{split}\label{Whitney}
\end{equation}

\begin{remark}
We have actually first obtained the number of parking functions with center of length $\ell$ by counting \emph{rook words with run $\ell$}, for $1\leq\ell\leq n$ \cite{DGO2}.
Rook words were introduced by Leven, Rhoades and Wilson \cite{LRW} also as labels of the regions of the Ish arrangements. If $\bb=b_1,\dotsb b_n\in[n]^n$, the \emph{run} of $\bb$ is $\run(\bb) = \max\big\{i\in[n]\mid [i]\subseteq
\{b_1,\dotsc,b_n\}\big\}$
and $\bb$ is a \emph{rook word} if $b_1\leq\run(\bb)$ \cite{LRW}. Let
$$R (\bb) = \big\{ \max \ \bb^{-1} (\{ j \}) \mid 1 \leq j \leq \run (\bb) \big\}\,.$$
Then \eqref{Whitney} is an easy consequence of the following result.
\begin{theorem}\cite[Theorem 3.4]{DGO2}
There exist $\Psi,\Phi:[n]^n$, inverse to each other, such that:
\begin{enumerate}
\item For every $\bb \in [n]^n$,\\
$z(\bb) = \run (\Phi (\bb))$, $Z(\bb) = R (\Phi (\bb))$, $\run (\bb) = z(\Psi (\bb))$, and $R (\bb) =Z(\Psi (\bb))$;
\item $\Phi (\PF^*_n) = \PF^*_n$.
\end{enumerate}
\end{theorem}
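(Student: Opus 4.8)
The plan is to realise $\Phi$ and $\Psi$ as mutually inverse rearrangement maps built from a single combinatorial decomposition of an arbitrary word, and to arrange that the identities $z(\bb)=\run(\Phi(\bb))$ and $Z(\bb)=R(\Phi(\bb))$ hold \emph{by construction}. First I would record the greedy description of the center: scanning $\bb=b_1\dotsb b_n$ from left to right with a counter $c$ (initially $0$), a position $p$ enters $Z(\bb)$ exactly when $b_p\leq c+1$, and then $c$ is incremented; since enlarging the chosen set only relaxes the later constraints, this produces the largest admissible set, so it computes $Z(\bb)$ and $z(\bb)$. This scan decomposes any word bijectively into the data $\bigl(Z(\bb),(b_{x_i})_i,(e_q)_q\bigr)$, where $Z(\bb)=\{x_1<\dotsb<x_\ell\}$, each center value satisfies $b_{x_i}\in\{1,\dotsc,i\}$ (so there are $i$ choices at $x_i$), and each non-center position $q$ at which the counter equals $i$ is recorded by its excess $e_q=b_q-(i+1)\in\{1,\dotsc,n-i-1\}$.

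The crux is the matching bookkeeping on the other side. Fix a set $\{x_1<\dotsb<x_\ell\}$ and count the words $\bc$ with $\run(\bc)=\ell$ and $R(\bc)=\{x_1,\dotsc,x_\ell\}$: the value $\ell+1$ must be absent, each value $j\in\{1,\dotsc,\ell\}$ has its last occurrence at one of the $x_i$ (an assignment giving a permutation $\sigma\in S_\ell$, hence $\ell!$ choices), and a position $q$ at which the counter equals $i$ may carry any value $j\leq\ell$ whose last occurrence lies strictly to its right together with any value in $\{\ell+2,\dotsc,n\}$ --- exactly $(\ell-i)+(n-\ell-1)=n-i-1$ admissible values. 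Thus the fibre of $(\run,R)$ over $(\ell,\{x_i\})$ has size $\ell!\prod_i(n-i-1)^{g_i}$, with $g_i$ the gap lengths, which is \emph{identical} to the fibre of $(z,Z)$ computed from the center scan. I would then define $\Phi$ to match the two decompositions: send the factorial-base tuple $(b_{x_i})_i$ to the permutation $\sigma\in S_\ell$ with that Lehmer code, and in each gap carry the $n-i-1$ excesses to the $n-i-1$ admissible values by the order-preserving bijection. Each piece being a bijection of equal-sized sets, $\Phi$ is a bijection of $[n]^n$; its inverse $\Psi$ reads off $\run$, $R$, $\sigma$ and the gap values of $\bc$ and runs the correspondences backwards, and by construction $\run(\Phi(\bb))=\ell=z(\bb)$ and $R(\Phi(\bb))=\{x_i\}=Z(\bb)$.

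The two remaining identities come for free: applying $z(\bw)=\run(\Phi(\bw))$ and $Z(\bw)=R(\Phi(\bw))$ to $\bw=\Psi(\bb)$ and using $\Phi\circ\Psi=\mathrm{id}$ yields $z(\Psi(\bb))=\run(\bb)$ and $Z(\Psi(\bb))=R(\bb)$, which completes part (1). Hence the genuinely separate statement is the parking invariance $\Phi(\PF^*_n)=\PF^*_n$ of part (2), and this is the step I expect to be the main obstacle: $\PF^*_n$ is not a level set of $(z,Z)$ --- for instance $21\in\PF^*_2$ has $z=1$ while $11\in\PF^*_2$ has $z=2$ --- so membership is not transported automatically and must be checked against the rearrangement rule itself.

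For part (2) the plan is to use the multiplicity characterisation $\ba\in\PF^*_n\iff\#\{p:a_p\leq i\}\geq i$ for all $i$ (equivalently the weakly increasing rearrangement satisfies $a_{(i)}\leq i$) and to show it is preserved by the gap-by-gap rule, most plausibly by induction on $n$ after peeling off the largest value or the last scanned position and tracking how these Hall-type inequalities transform under the Lehmer-code substitution on the center values and the monotone relabelling of the excesses. Because $\Phi$ keeps the $R$-positions and the gap lengths fixed while only permuting the small values and relabelling excesses monotonically, the counting inequalities ought to be stable; verifying this uniformly, rather than case by case as for small $n$, is the delicate point and is where I would concentrate the effort.
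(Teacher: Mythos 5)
The statement you are proving is not actually proved in this paper at all: it is imported verbatim from \cite[Theorem 3.4]{DGO2} and used as a black box to deduce Equation (4.1), so your attempt can only be judged on its own merits, not against an internal argument. On those merits, your part (1) is sound. The greedy left-to-right scan does compute $Z(\bb)$ (admissible sets are closed under union, and the standard exchange/induction argument shows the greedy set dominates every admissible set prefix by prefix); your two decompositions --- center positions, center values $b_{x_i}\in[i]$, and excesses on one side; last-occurrence positions, a permutation $\sigma\in S_\ell$, and gap values on the other --- are both genuinely bijective; and the fibre sizes $\ell!\prod_i(n-i-1)^{g_i}$ agree, so any fibre-by-fibre matching yields a bijection $\Phi$ with $\run(\Phi(\bb))=z(\bb)$ and $R(\Phi(\bb))=Z(\bb)$, and the identities for $\Psi=\Phi^{-1}$ follow formally exactly as you say.

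The genuine gap is the one you yourself flag: part (2) is never proved. You offer only a plan (``induction on $n$ \dots ought to be stable''), and since, as you correctly observe, $\PF^*_n$ is not a union of $(z,Z)$-fibres, part (2) is precisely the substantive content of the theorem; a submission that stops at part (1) has not proved it. What you are missing is that your own map already does the job, by an argument much shorter than the induction you propose. Fix the fibre over $(\ell,X)$ and, for $m\geq \ell$, let $D_{\leq m}$ be the number of non-center (resp.\ non-$R$) positions carrying a value $\leq m$. On either side, the parking inequalities $\#\{p\,:\,b_p\leq m\}\geq m$ are automatic for $m\leq\ell$: on the center side the positions $x_1,\dotsc,x_m$ carry values $\leq m$, and on the run side the values $1,\dotsc,m$ all occur. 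For $m>\ell$ the inequality reads $D_{\leq m}\geq m-\ell$, and your monotone relabelling leaves every such count unchanged: a gap-$i$ position with excess $e$ has value $e+i+1$ on the center side, while on the run side it receives either a value $\leq\ell$ (when $e\leq\ell-i$, in which case also $e+i+1\leq\ell+1\leq m$, so both sides are counted) or exactly the value $e+i+1$ (when $e>\ell-i$, since the large admissible values $\ell+2,\dotsc,n$ are matched in order). Hence $D_{\leq m}$ is the same before and after applying $\Phi$ for every $m>\ell$, so membership in $\PF^*_n$ transfers in both directions --- independently of the Lehmer-code convention and of $\sigma$, since the conditions for $m\leq\ell$ never see the center values or the permutation. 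Adding this paragraph turns your construction into a complete proof; without it, the theorem's key assertion rests on a hope rather than an argument.
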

Now, for counting parking functions with a given run $k$, we may count \emph{rook words}  with the same run, as we have proved. Although we cannot use the same argument for the hyperplane arrangements between Shi and Ish, we have verified the following conjecture up to $n=8$.
\end{remark}

\begin{conjecture}\label{conj}
For every $n\in\N$, $n\geq3$,  the number of $\cG^X$-parking functions with reverse center of a given length does not depend on $X\subseteq(1,n)$.
\end{conjecture}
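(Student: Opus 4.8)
The plan is to stratify the $\cG^X$-parking functions by their reverse center and to induct on $|(1,n)\setminus X|$, the number of indices removed from the Shi set $(1,n)$. Writing $N_k(X)$ for the number of $\cG^X$-parking functions with $\tz(\ba)=k$, it is enough to prove the single-step invariance $N_k(X)=N_k(X\setminus\{\ell\})$ for every $\ell\in X$; connectivity of the removal chain then forces all $N_k(X)$ to agree, and the already-established identity $\ZP_n(t)=\ZI_n(t)$ is the special instance comparing the two extremes $\Shi_n$ and $\Ish_n$. The first ingredient is a description of the vectors with a prescribed reverse center that makes no reference to $X$: unwinding Definition~\ref{def.crc}, a vector $\ba\in\{0,\dots,n-1\}^n$ satisfies $\tZ(\ba)=\{x_1>\dots>x_k\}$ precisely when $a_{x_i}\le i-1$ for each $i$ and $a_j\ge t_j+1$ for every $j\notin\{x_1,\dots,x_k\}$, where $t_j=|\{m\mid x_m>j\}|$ counts the reverse-center elements above $j$. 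In particular the number of vectors with a given reverse center $R$ of size $k$ is $k!\prod_{j\notin R}(n-1-t_j)$, which is exactly how the closed form for $|\IPF^k_n|$ in \eqref{Whitney} arises once Proposition~\ref{prop.ish} restricts the sum to those $R$ containing $1$.

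The second ingredient is an explicit, reverse-center–stratified characterization of the $\cG^X$-parking functions, obtained by generalizing Lemma~\ref{Ishlabels}. I would run the $\dfs$-burning algorithm with the lists $\cN(i)$ in descending order and read off which vectors fit $\cG^X$; the out-neighbourhoods of $\cG^X$ are transparent, every vertex $i$ pointing to all $j>i$, to vertex $1$ with multiplicity equal to the number of non-$X$ indices below $i$, and once to each element of $X$ below $i$. I expect the blocking (non-parking) sets to be the top sets $\{v,\dots,n\}$, possibly enlarged by vertex $1$ and by elements of $X$ below $v$, so that the resulting inequalities combine a Shi-type local condition on the coordinates indexed by $X$ with an Ish-type condition funnelling the remaining coordinates into vertex $1$. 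Passing from $X$ to $X\setminus\{\ell\}$ changes the digraph in a single, local way: for every $v>\ell$ the arc $(v,\ell)$ is rerouted to $(v,1)$. Consequently the parking condition is affected only on subsets $I$ separating $1$ from $\ell$, on which the number of arcs leaving $I$ from a vertex $v>\ell$ shifts by $\pm1$; the task is to absorb exactly these shifts while preserving $\tz$.

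To conclude I would compare $N_k(X)$ and $N_k(X\setminus\{\ell\})$ through the stratification $N_k(X)=\sum_{|R|=k}P_X(R)$, where $P_X(R)$ is the number of $\cG^X$-parking functions with $\tZ(\ba)=R$. A warning that shapes the whole approach is that the summands $P_X(R)$ are \emph{not} individually $X$-independent: for $X=(1,n)$ there exist parking functions whose reverse center omits $1$, whereas for $X=\varnothing$ every Ish-parking function has $1$ in its reverse center by Proposition~\ref{prop.ish}. Hence there can be no bijection fixing $R$, and the equality must come from a redistribution of mass across reverse centers of the same size $k$. I would seek this either as a sign-reversing involution on the symmetric difference of the two parking-function sets that pairs vectors of equal $\tz$, or, paralleling the unitriangular identity $\cM^{X\setminus\{\ell\}}=\cM^X(I_n+M_\ell)$ of Theorem~\ref{bij2}, as an algebraic identity between the stratified generating functions $\sum_{\ba}t^{\tz(\ba)}$ that telescopes over the removal of single elements of $X$.

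The main obstacle is precisely this redistribution. Fixing the reverse center decouples the count into the clean product $k!\prod(n-1-t_j)$, but the $\cG^X$-parking condition cuts across this stratification in an $X$-dependent way, so the hard part is to show that the over- and under-counting produced when $1$ enters or leaves the reverse center cancels after summing over all $R$ of a given size $k$. Equivalently, one must explain combinatorially why the failure of a per-$R$ bijection is repaired at the level of the size-$k$ total --- something the rook-word model of \cite{DGO2} achieves only at the two extremes $\Shi_n$ and $\Ish_n$, and which appears to require a genuinely new, $X$-uniform model for the intermediate arrangements.
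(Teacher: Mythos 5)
The statement you were asked to prove is a \emph{conjecture} in the paper: the authors offer no proof of it, only computational verification up to $n=8$, together with a proof of the extreme case, namely the identity $\ZP_n(t)=\ZI_n(t)$ comparing $X=(1,n)$ (Shi) with $X=\varnothing$ (Ish), which they obtain in Section~4 from Proposition~\ref{prop.ish} and the rook-word results of \cite{DGO2}. So there is no hidden paper proof your attempt should be measured against; the only question is whether your proposal closes the conjecture on its own. It does not.

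Your preparatory steps are sound and consistent with the paper. The reduction to the single-step invariance $N_k(X)=N_k(X\setminus\{\ell\})$ is valid; the characterization of $\tZ(\ba)=R=\{x_1>\dots>x_k\}$ by the conditions $a_{x_i}\le i-1$ on $R$ and $a_j\ge t_j+1$ off $R$ is correct, as is the resulting count $k!\prod_{j\notin R}(n-1-t_j)$ for fixed $R$; the out-neighbourhood structure of $\cG^X$ is described accurately; and your warning is a genuine insight: the per-center counts $P_X(R)$ are \emph{not} $X$-invariant (for example $201\in\PF_3$ has reverse center $\{2\}\not\ni 1$, while by Proposition~\ref{prop.ish} no Ish-parking function can have that reverse center), so any proof must redistribute mass among centers of equal size rather than fix $R$. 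But the step that would actually establish the invariance --- the sign-reversing involution, or a $t$-graded analogue of the identity $\cM^{X\setminus\{\ell\}}=\cM^X(I_n+M_\ell)$ --- is only named, never constructed. Note in particular that the matrix identity from Theorem~\ref{bij2} controls only the \emph{total} count (a determinant) and carries no memory of the statistic $\tz$, which is precisely why the authors could prove ungraded bijectivity for $X=(k,n)$ yet had to leave the graded statement as a conjecture; your claimed description of the blocking sets of $\cG^X$ is likewise flagged as an expectation rather than proved. Your closing paragraph concedes that this ``main obstacle'' remains: since that obstacle \emph{is} the content of the conjecture, what you have is a plausible plan of attack with correct scaffolding, not a proof.
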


\begin{remark}
Equation~\ref{Whitney} shows that the number of parking functions with center (or reverse center) of length $r$ is related with the number of relatively bounded faces of dimension $d$, $0\leq d<n$, $g_d=(-1)^{d+1}\;[t^{n-d}]\,w(\Shi_n,t,1)$, where $w(\Shi_n,t,q)$ is the Whitney polynomial of the Shi arrangement \cite[Theorem 6.5]{Athan}. The relation is as follows.
$$|\PF^k_n|=k\,g_{n-k+1} /\textstyle\binom{n}{k-1}\,.$$
For example, if $n=3$ there are $6$ relatively bounded faces of dimension $1$, $9$ relatively bounded faces of dimension $2$ and $4$ relatively bounded faces of dimension $3$; see Figure~\ref{arrs}. And there are $6=3\times6/\binom{3}{2}$ parking functions with reverse center of length $3$
{\footnotesize ($000$, $010$, $110$, $210$, $200$, and $100$)},
$6=2\times9/\binom{3}{1}$ with reverse center of length $2$
{\footnotesize ($110$, $120$, $001$, $002$, $102$, and $101$)},
and $4=1\times4/\binom{3}{0}$ with reverse center of length $1$ {\footnotesize ($011$, $012$, $021$, and $201$)}. But the same does not happen with the Ish arrangement, where the Whitney polynomial is different, since the number of Ish-parking functions with center of a given length is equal to the corresponding number of parking functions.
\end{remark}

\appendix\section{The inverse of the labeling of the Shi arrangement}

By definition, the inverse of a (bijective) labeling $\mu$ of the regions of an arrangement maps each label $\ba$ to the region $\cR$ such that $\ba=\mu(\cR)$. In a previous paper \cite{DGO}, we gave a recursive definition of the inverse of the original Pak-Stanley labeling $\lambda$ of the regions of the Shi arrangement. Let us extend this definition to the labeling $\ell$.

We begin by considering how the two labelings are related to each other.

Every region $\cR$ of $\Shi_n$ corresponds bijectively to a pair
$(\bw,\fI)$, called a \emph{valid pair} \cite{Stan2}, where
\begin{itemize}
\item $\bw=(w_1,\dotsc,w_n)\in\mathfrak{S}_n$;
\item $\fI$ is an \emph{anti-chain of proper intervals}, meaning that
  $\fI$ is a collection of intervals $[i,j]$ with $1\leq i<j\leq n$
  such that if $I,I'\in\fI$ and $I\neq I'$ then $I\nsubseteq I'$ (and
  $I'\nsubseteq I$);
\item for every $I=[b,e]\in\fI$, $w_b<w_e$.
\end{itemize}
Now, consider the labels of $\cR$ (that we see as labels of $(\bw,\fI)$) by the two labelings of the regions of the Shi arrangement of Section~\ref{sec.2}, the original Pak-Stanley labeling $\lambda$ and the labeling $\ell$,
\begin{align*}
\lambda(\bw,\fI)&=\big(\lambda(\bw,\fI,1),\dotsc,\lambda(\bw,\fI,n)\big)\in\PF_n\\
\ell(\bw,\fI)&=\big(\ell(\bw,\fI,1),\dotsc,\ell(\bw,\fI,n)\big)\in\PF_n
\end{align*}
By definition, the region $\cR$ corresponding to the valid pair $(\bw,\fI)$ is separated from the region
$\cR_0$ by the hyperplane of equation $x_i=x_j$ for each $(i,j)$ such that $1\leq i<j\leq n$ and $w_i>w_j$, and
by the hyperplane of equation $x_i=x_j+1$ for each $(i,j)$ such that $1\leq i<j\leq n$, $w_i<w_j$ and it does not exist $I\in\fI$ with both $i$ and $j$ in $I$ \cite{Stan2}. Hence,
\begin{align*}
\lambda(w,\fI,w_i)&=\big|\big\{j\in[n]\mid i<j \,\wedge\, w_i>w_j \big\}\big|\\
&{}+{}\big|\big\{j\in[n]\mid i<j \,\wedge\, w_i<w_j\,,\ \text{no $I\in\fI$ satisfies $i,j\in I$\,}\big\}\big|\\
&=n-i-\big|\big\{j\in[n]\mid i<j\leq e_i \,\wedge\, w_i<w_j \big\}\big|\\
\ell(w,\fI,w_j) &=\big|\big\{i\in[n]\mid i<j \,\wedge\, w_i>w_j \big\}\big|\\
&{}+{}\big|\big\{i\in[n]\mid i<j \,\wedge\, w_i<w_j\,,\ \text{no $I\in\fI$ satisfies $i,j\in I$\,}\big\}\big|\\
&=j-1-\big|\big\{i\in[n]\mid b_j\leq i<j \,\wedge\, w_i<w_j \big\}\big|
\end{align*}
where $e_i=\max\big\{e\mid i\in[b,e]\in\fI \big\}$ and $b_j=\min\big\{b\mid j\in[b,e]\in\fI \big\}$.

Given the valid pair $P=(\bw,\fI)$ with $\bw=(w_1,w_2,\dotsc,w_n)$, consider the new (valid) pair
$\widetilde{P}=(\tbw,\tI)$, where
\begin{align*}
&\tbw=(n+1-w_n,n+1-w_{n-1},\dotsc,n+1-w_1)\,,\\
&\tI=\big\{[n+1-e,n+1-b]\mid [b,e]\in\mathcal{I}\big\}
\end{align*}
and note that $\lambda(\tbw,\tI,\tw_{n+1-j})=\ell(\bw,\fI,w_j)$. Hence, since $\tw_{n+1-j}=n+1-w_j$,
we obtain the following result.

\begin{lemma}\label{lem.inv}
With the previous notations,
$\lambda(\tbw,\tI)$ is the reverse of $\ell(\bw,\fI)$.\qed
\end{lemma}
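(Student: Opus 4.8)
The plan is to make explicit the pointwise identity $\lambda(\tbw,\tI,\tw_{n+1-j})=\ell(\bw,\fI,w_j)$ asserted just above the statement, and then to read off the lemma from it. First I would observe that reversing the word $\ell(\bw,\fI)=\big(\ell(\bw,\fI,1),\dotsc,\ell(\bw,\fI,n)\big)$ produces the word whose $m$-th entry is $\ell(\bw,\fI,n+1-m)$; so it suffices to prove $\lambda(\tbw,\tI,m)=\ell(\bw,\fI,n+1-m)$ for every $m\in[n]$. Since $\bw$ is a permutation and $\tw_{n+1-j}=n+1-w_j$, setting $m=\tw_{n+1-j}$ makes $m$ range over all of $[n]$ as $j$ does, while $n+1-m=w_j$; thus the displayed pointwise identity is precisely what is required.

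Before invoking it I would check that $\widetilde{P}=(\tbw,\tI)$ is again a valid pair, so that $\lambda(\tbw,\tI)$ is a legitimate label: the involution $[b,e]\mapsto[n+1-e,n+1-b]$ sends proper intervals to proper intervals and preserves inclusion, hence carries anti-chains to anti-chains, and the requirement $w_b<w_e$ becomes $\tw_{n+1-e}<\tw_{n+1-b}$, that is, the validity condition for the image interval.

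The heart of the argument is a change of variable driven by the two explicit formulas for $\lambda$ and $\ell$ derived in the excerpt. Writing $\tilde{i}=n+1-j$, I would first note that $\tilde{i}\in[n+1-e,n+1-b]$ if and only if $j\in[b,e]$, which together with the definition of $\tI$ gives $\tilde{e}_{\tilde{i}}=\max\{\,n+1-b\mid j\in[b,e]\in\fI\,\}=n+1-b_j$ (with the convention $\tilde{e}_{\tilde{i}}=\tilde{i}$, $b_j=j$ when no interval contains the index, which the involution respects). Substituting $\tilde{i}=n+1-j$ into the formula for $\lambda$, its leading term $n-\tilde{i}=j-1$ already matches the leading term of $\ell(\bw,\fI,w_j)$. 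For the subtracted cardinalities I would apply the bijection $\tilde{j}=n+1-i$ of $[n]$, under which the three defining constraints transform as $\tilde{i}<\tilde{j}\Leftrightarrow i<j$, then $\tilde{j}\leq\tilde{e}_{\tilde{i}}=n+1-b_j\Leftrightarrow b_j\leq i$, and $\tw_{\tilde{i}}<\tw_{\tilde{j}}\Leftrightarrow w_i<w_j$ (using $\tw_{n+1-i}=n+1-w_i$). Hence the counting set for $\lambda(\tbw,\tI,\tw_{n+1-j})$ is carried bijectively onto $\{\,i\mid b_j\leq i<j\wedge w_i<w_j\,\}$, the set subtracted in $\ell(\bw,\fI,w_j)$, so the two labels coincide; reversing the argument as in the first paragraph then gives the lemma.

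I do not expect any genuine obstacle: the computation is pure index bookkeeping against the two formulas displayed before the statement. The only points demanding care are the empty-interval conventions (where $e_i=i$ and $b_j=j$), which must be verified to be compatible across the involution, and the correct reflection $b_j\mapsto n+1-\tilde{e}_{n+1-j}$ relating the two interval-extension quantities.
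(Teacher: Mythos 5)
Your proposal is correct and takes essentially the same approach as the paper: the paper's proof of this lemma is exactly the preceding derivation of the formulas for $\lambda$ and $\ell$ together with the asserted pointwise identity $\lambda(\tbw,\tI,\tw_{n+1-j})=\ell(\bw,\fI,w_j)$, which you verify by the same change of indices ($\tilde{i}=n+1-j$, $\tilde{j}=n+1-i$, $\tilde{e}_{\tilde{i}}=n+1-b_j$). The only difference is that you write out the bookkeeping (and the check that $(\tbw,\tI)$ is a valid pair) which the paper leaves implicit.
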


It is now clear how to obtain $\ell^{-1}$ from $\lambda^{-1}$. Let us apply this method to an example taken from Stanley
\cite[Example, p.484]{Stan2}.
Let $\cR$ be the region of the Shi arrangement in $\Reals^9$ defined by:
{\small \begin{align*}
&x_5>x_2>x_1>x_7>x_6>x_9>x_3>x_4>x_8\,,\\
& x_7+1>x_5\,,x_3+1>x_2\,,x_8+1>x_7\,,\\
& x_5>x_6+1\,,x_1>x_4+1\,.
\end{align*}}
The valid pair associated with $\cR$ is $(\bw,\fI)$ where $\bw=5\,2\,1\,7\,6\,9\,3\,4\,8\in\mathfrak{S}_9$ and $\fI=\big\{[1,4],[2,7],[4,9]\big\}$, and we may represent it simply by \footnote{Note that the same valid pair was represented
$\begin{tikzpicture}[scale=.2]
\draw (0,0) node [inner sep=.25mm,minimum size=1mm] (y5){$8$};
\draw (1,0) node [inner sep=.25mm,minimum size=1mm] (y2){$4$};
\draw (2,0) node [inner sep=.25mm,minimum size=1mm] (y1){$3$};
\draw (3,0) node [inner sep=.25mm,minimum size=1mm] (y7){$9$};
\draw (4,0) node [inner sep=.25mm,minimum size=1mm] (y6){$6$};
\draw (5,0) node [inner sep=.25mm,minimum size=1mm] (y9){$7$};
\draw (6,0) node [inner sep=.25mm,minimum size=1mm] (y3){$1$};
\draw (7,0) node [inner sep=.25mm,minimum size=1mm] (y4){$2$};
\draw (8,0) node [inner sep=.25mm,minimum size=1mm] (y8){$5$};
\draw  [thick,looseness=0.75]  (y5) to [out=90,in=90] (y9);
\draw  [thick,looseness=0.75]  (y1) to [out=90,in=90] (y4);
\draw  [thick,looseness=0.75]  (y9) to [out=90,in=90] (y8);
\end{tikzpicture}$ in our previous article (Cf. \cite[Example~2.6]{DGO}).}\,:
\\[-15pt]
$$\begin{tikzpicture}[scale=.3]
\draw (0,0) node [inner sep=.5mm,minimum size=2mm] (x5){$5$};
\draw (1,0) node [inner sep=.5mm,minimum size=2mm] (x2){$2$};
\draw (2,0) node [inner sep=.5mm,minimum size=2mm] (x1){$1$};
\draw (3,0) node [inner sep=.5mm,minimum size=2mm] (x7){$7$};
\draw (4,0) node [inner sep=.5mm,minimum size=2mm] (x6){$6$};
\draw (5,0) node [inner sep=.5mm,minimum size=2mm] (x9){$9$};
\draw (6,0) node [inner sep=.5mm,minimum size=2mm] (x3){$3$};
\draw (7,0) node [inner sep=.5mm,minimum size=2mm] (x4){$4$};
\draw (8,0) node [inner sep=.5mm,minimum size=2mm] (x8){$8$};
\draw  [thick,looseness=0.75]  (x8) to [out=90,in=90] (x7);
\draw  [thick,looseness=0.75]  (x7) to [out=90,in=90] (x5);
\draw  [thick,looseness=0.75]  (x3) to [out=90,in=90] (x2);
\end{tikzpicture}\ .$$
Then, for instance, $b_5=2$ since this is the index of the leftmost element where an arc begins that ``covers'' $w_5=6$,
$\ \ell(w,\fI,6)=2=5-1-2$, being
$\big\{i\in[n]\mid 2\leq i<5 \,\wedge\, w_i<6 \big\}=\{2,3\}$.
All in all,
\begin{equation}\label{eq.inv}
\ba=\ell(\bw,\fI)=2\,1\,4\,6\,0\,2\,0\,4\,1\in\PF_9\,.
\end{equation}
Then $\tba=\lambda(\tw,\tI)$, that we may represent by
$\begin{tikzpicture}[scale=.25]
\draw (0,0) node [inner sep=.25mm,minimum size=1mm] (z5){$2$};
\draw (1,0) node [inner sep=.25mm,minimum size=1mm] (z2){$6$};
\draw (2,0) node [inner sep=.25mm,minimum size=1mm] (z1){$7$};
\draw (3,0) node [inner sep=.2mm,minimum size=.5mm] (z7){$1$};
\draw (4,0) node [inner sep=.2mm,minimum size=.5mm] (z6){$4$};
\draw (5,0) node [inner sep=.25mm,minimum size=1mm] (z9){$3$};
\draw (6,0) node [inner sep=.25mm,minimum size=1mm] (z3){$9$};
\draw (7,0) node [inner sep=.25mm,minimum size=1mm] (z4){$8$};
\draw (8,0) node [inner sep=.25mm,minimum size=1mm] (z8){$5$};
\draw  [thick,looseness=0.75]  (z5) to [out=90,in=90] (z9);
\draw  [thick,looseness=0.75]  (z1) to [out=90,in=90] (z4);
\draw  [thick,looseness=0.75]  (z9) to [out=90,in=90] (z8);
\end{tikzpicture}$, is $\tba=1\,4\,0\,2\,0\,6\,4\,1\,2\in\PF_9$.

If we want to recover $(\bw,\fI)$ out of $\ba$, we may start by reversing this parking function, obtaining again
$\tba=1\,4\,0\,2\,0\,6\,4\,1\,2\in\PF_9$. If we now construct $\lambda^{-1}(\tilde{\ba})$, e.g. by proceeding as described in our previous work \cite{DGO}, for example, we obtain back the valid pair represented by
$\begin{tikzpicture}[scale=.25]
\draw (0,0) node [inner sep=.25mm,minimum size=1mm] (x2){$2$};
\draw (1,0) node [inner sep=.25mm,minimum size=1mm] (x6){$6$};
\draw (2,0) node [inner sep=.25mm,minimum size=1mm] (x7){$7$};
\draw (3,0) node [inner sep=.2mm,minimum size=.5mm] (x1){$1$};
\draw (4,0) node [inner sep=.2mm,minimum size=.5mm] (x4){$4$};
\draw (5,0) node [inner sep=.25mm,minimum size=1mm] (x3){$3$};
\draw (6,0) node [inner sep=.25mm,minimum size=1mm] (x9){$9$};
\draw (7,0) node [inner sep=.25mm,minimum size=1mm] (x8){$8$};
\draw (8,0) node [inner sep=.25mm,minimum size=1mm] (x5){$5$};
\draw  [thick,looseness=0.75]  (x2) to [out=90,in=90] (x3);
\draw  [thick,looseness=0.75]  (x7) to [out=90,in=90] (x8);
\draw  [thick,looseness=0.75]  (x3) to [out=90,in=90] (x5);
\end{tikzpicture}$.
By Lemma~\ref{lem.inv}, this valid pair is $R=(\tbw,\tI)$ and the one we want to obtain is
$\widetilde{R}=(\bw,\fI)$.

\begin{remark}\label{ref}
Note that, by definition, if a point $P=(x_1, x_2,\dotsc, x_n)\in\cR$, a region of the Shi arrangement with valid pair $P=(\bw,\fI)$ (meaning that
$x_{w_1}>x_{w_2}>\dotsb> x_{w_n}$, etc.), then
$Q=( -x_n,  -x_{n-1},\dotsc, -x_1)$ belongs to the region $\widetilde{\cR}$ of valid pair $\widetilde{P}=(\tbw,\tI)$.
Hence, the map $(x_1, x_2,\dotsc, x_n)\mapsto ( -x_n,  -x_{n-1},\dotsc, -x_1)$ preserves the Shi arrangement and switches the two labelings.
\end{remark}

Since centers appeared in connection with the proceeding for inverting the Pak-Stanley labeling that we have just mentioned, a brief reference to the meaning of this concept in that context might be in order. We highlight the following result.

\begin{theorem}[{\cite[Lemmas 4.3 and 4.7, adapt.]{DGO}}]
Let, for a parking function $\ba=\lambda(\bw,\fI)\in\PF_n$ with a valid pair $(w_1\dotsb w_n,\fI)$,
$\bu=u_1u_2\dotsb u_n=w_nw_{n-1}\dotsb w_1$, 
$X=Z(\bas)$ and $m=|X|$.
If
\begin{align*}
&\bx:[m]\to X\text{ is increasing}\,,\ \bap=\ba\circ\bx\,,\\
&\bup=u_1\dotsb u_m
\ \text{ and }\ \Ip=\big\{[i,j]\in\fI\,\mid\,j\leq m\big\}\,,
\end{align*}
then:
\begin{itemize}
\item As sets, $X=\bup$.
\item $\bap=\lbdp\big(\bx^{-1}\circ\bup,\Ip\big)$, where
$\lbdp:\cR(\cS_m)\to\PF_m$ is the Pak-Stanley bijection of the Shi arrangement of size $m$.
\end{itemize}
\end{theorem}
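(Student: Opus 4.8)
The plan is to run everything off the explicit description of $\lambda$ established just above, namely
$\lambda(\bw,\fI,w_i)=(n-i)-\big|\{\,j:i<j\le e_i,\ w_i<w_j\,\}\big|$, where $e_i=\max\{e\mid i\in[b,e]\in\fI\}$. Writing $c_i$ for the correction term and $a^*_{w_i}=\lambda(\bw,\fI,w_i)+1$, this gives $a^*_{w_i}=(n-i+1)-c_i\le n-i+1$, the defect $c_i\ge 0$ being controlled entirely by the intervals of $\fI$ that cover the position $i$. The statement then separates into the set equality $X=Z(\bas)=\{w_{n-m+1},\dots,w_n\}$ and the recursive identity $\bap=\lbdp(\bx^{-1}\circ\bup,\Ip)$, and I would prove them in that order.

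For the first part I would record the \emph{greedy} description of the center: scanning the indices $1,2,\dots,n$ from left to right and admitting index $p$ precisely when $a^*_p$ does not exceed one more than the number of indices already admitted produces a chain on which $a^*_{x_k}\le k$, and by union-closure this chain is all of $Z(\bas)$. Setting $m:=|Z(\bas)|$ and $T_m:=\{w_{n-m+1},\dots,w_n\}$, it then suffices to prove $Z(\bas)\subseteq T_m$, since equality follows by cardinality; equivalently, the \emph{discarded} indices are exactly the prefix values $w_1,\dots,w_{n-m}$. Using $a^*_{w_i}=(n-i+1)-c_i$, an index $w_i$ occurring early in $\bw$ carries a large value $a^*_{w_i}$ that can only be placed after many strictly smaller admitted predecessors; the defect $c_i$, which counts precisely the $\fI$-covered positions $j>i$ with $w_i<w_j$, is exactly the slack available, and a counting argument (or an induction peeling off the smallest coordinate $x_{w_n}$) shows the budget never suffices when $i\le n-m$. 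This identification of the center with the suffix value-set of $\bw$ is the crux of the whole argument and the step I expect to be the hardest.

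For the second part I would exploit the \emph{locality} of the label formula. If $w_i\in X$, i.e.\ $i>n-m$, then every $j$ entering the correction term satisfies $j>i>n-m$, so $j$ again indexes a suffix position and hence a value of $X$; thus $\lambda(\bw,\fI,w_i)$ is computed solely from data internal to the block $\{n-m+1,\dots,n\}$. Relabelling these positions by $1,\dots,m$ and the values of $X$ by their ranks through $\bx$ turns the $n$-dimensional formula into the corresponding formula for the Shi arrangement $\cS_m$, evaluated at the valid pair assembled from the restricted data — the reduced permutation $\bx^{-1}\circ\bup$ together with exactly those intervals of $\fI$ that remain inside the relabelled block, i.e.\ $\Ip$. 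This yields $\bap=\ba\circ\bx=\lbdp(\bx^{-1}\circ\bup,\Ip)$ coordinate by coordinate. The only delicate point here is the bookkeeping of orientation: the reversal $\bu=w_n\cdots w_1$ and the interplay between $\lambda$ and $\ell$ recorded in Lemma~\ref{lem.inv} and Remark~\ref{ref} must be tracked so that the reduced permutation and the restricted interval set are oriented consistently with the conventions fixed in the paper; once the locality property is in hand, this is routine.
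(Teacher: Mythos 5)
You should know at the outset that the paper never proves this theorem: it is imported, with notation adapted, from Lemmas 4.3 and 4.7 of \cite{DGO}, and the appendix's only proved content is Lemma~\ref{lem.inv}, the translation between $\lambda$ and $\ell$ that lets the authors reuse that earlier work. So your attempt must stand on its own, and as it is written it has a genuine gap at exactly the point you yourself flag as the crux. You never prove that no value $w_i$ with $i\le n-m$ belongs to $Z(\bas)$, i.e.\ that the center is precisely the value set of a suffix of $\bw$. Your greedy description of the center is correct (union-closure plus a standard exchange argument), and the reduction to the inclusion $Z(\bas)\subseteq\{w_{n-m+1},\dotsc,w_n\}$ by cardinality is fine, but the decisive inclusion is dispatched with ``a counting argument (or an induction peeling off the smallest coordinate) shows the budget never suffices,'' which asserts the result rather than proves it. Since the first bullet \emph{is} this identification, and the second bullet presupposes it, what you have is a plan, not a proof.

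The ``locality'' argument for the second bullet also has a concrete hole that you do not notice. The label of a block value is $n-i-\big|\{\,j\,:\,i<j\le e_i,\ w_i<w_j\,\}\big|$ with $e_i=\max\{e\mid i\in[b,e]\in\fI\}$, and this maximum can be realized by an interval of $\fI$ that \emph{straddles} the cut, covering positions on both sides of the boundary between the discarded prefix and the block. Such intervals belong to $\fI$ but not to $\Ip$, yet they can cover \emph{pairs} of block positions; discarding them can therefore change which hyperplanes of the form $x_i=x_j+1$ count as separating, hence change the label computed from the reduced pair. (In the paper's worked example, $\Ip=\{[1,6]\}$ while $[3,8]$ and $[6,9]$ are discarded; the labels survive only because every pair of block positions covered by a discarded interval happens to be covered by $[1,6]$ as well.) A complete proof must show this domination always occurs --- a structural compatibility between $\fI$, the valid-pair conditions, and the center cut, which is again part of what \cite{DGO} establishes --- whereas your proposal treats the second bullet as routine relabelling whose ``only delicate point'' is orientation bookkeeping. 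That assessment underestimates what remains to be done, so the proposal is incomplete in both halves.
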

Note that we know how to build $\bup$ directly out of $X$ and $\bap$, and how to recursively complete $(\bup,\Ip)$
onto $(\bw,\fI)$  (Cf. \cite{DGO}). In our example,
where $\bu=8\,4\,3\,9\,6\,7\,1\,2\,5\in\mathfrak{S}_9$
and $\fI=\big\{[1,6],[3,8],[6,9]\big\}$,
$\ba=\lambda(\bw,\fI)=2\,3\,{0}\,{0}\,7\,{2}\,{3}\,{0}\,{2}$ and $Z(\bas)=\{3,4,6,7,8,9\}$;
$\bup=8\,4\,3\,9\,6\,7$ and  $\bap={0}\,{0}\,{2}\,{3}\,{0}\,{2}=\lbdp(5\,2\,1\,6\,3\,4,\big\{[1,6]\big\})$.

\bigskip

\noindent\emph{Acknowledgements.}\enspace
We would like to thank Christian Krattenthaler for offering us the second proof of Theorem~\ref{bij2}, presented  here in Remark~\ref{krat}.
We would also like to thank the referee for various valuable suggestions, namely Remark~\ref{ref}.
This work was partially supported by CMUP (UID/MAT/00144/2013)
  and CIDMA (UID/MAT/04106/2013), which are funded by FCT (Portugal)
  with national (ME) and European structural funds through the
  programs FEDER, under the partnership agreement PT2020.


\begin{thebibliography}{99}
\bibitem{Ardila} F.~Ardila, Algebraic and Geometric Methods in Enumerative Combinatorics, in \textit{Handbook of
  Enumerative Combinatorics}, M. B\'ona (ed.), Discrete Math. and Its
  Appl., CRC Press, Boca Raton-London-New York (2015)  pp. 589--678.
\bibitem{AR} D.~Armstrong and B.~Rhoades, The Shi arrangement and the
  Ish arrangement, \textit{Trans.\ Amer.\ Math.\ Soc.} \textbf{364}
  (2012)  pp. 1509--1528.
  URL: \url{https://doi.org/10.1090/S0002-9947-2011-05521-2}.
\bibitem{Athan} C.~Athanasiadis,
Characteristic Polynomials of Subspace Arrangements and Finite Fields,
\textit{Advances in Mathematics} \textbf{122} (1996) pp. 193--233.
  URL: \url{http://dx.doi.org/10.1006/aima.1996.0059}.
\bibitem{DGO} R.~Duarte, A.~Guedes~de~Oliveira, The braid and the Shi
  arrangements and the Pak-Stanley labeling, \textit{European
    J.\ Combin.} \textbf{50} (2015) pp. 72--86.
  URL: \url{http://dx.doi.org/10.1016/ j.ejc.2015.03.017}.
\bibitem{DGO2} R.~Duarte, A.~Guedes~de~Oliveira, The number of parking functions with center of a given length,
submitted. URL: \url{https://arxiv.org/abs/1611.03707}.
\bibitem{DKM} A.~Duval, C.~Klivans, and J.~Martin,
The $G$-shi arrangement, and its relation to $G$-parking functions, (2011).
URL: \url{http://www.math.utep.edu/Faculty/duval/papers/nola.pdf}.
\bibitem{HP} S.~Hopkins and D.~Perkinson: Bigraphical arrangements. \textit{Trans.\
Amer.\ Math.\ Soc.} \textbf{368}  (2016)  pp. 709--725. URL: \url{http://dx.doi.org/10.1090/tran/6341}.
\bibitem{LRW} E.~Leven, B.~Rhoades and A.~T.~Wilson, Bijections for the
  Shi and Ish arrangements, \textit{European J.\ Combin.}
 \textbf{39} (2014)  pp. 1--23.
 URL: \url{http://dx.doi.org/10.1016/j.ejc.2013.12.001}.
\bibitem{Mazin} M.~Mazin, Multigraph Hyperplane Arrangements and Parking Functions.
URL: \url{https://arxiv.org/abs/1501.01225}.
\bibitem{PYY} D.~Perkinson, Q.~Yang and K.~Yu, $G$-parking functions
  and tree inversions, \textit{Combinatorica} \textbf{37} (2017) pp.269--282.
  URL: \url{http://dx.doi.org/10.1007/s00493-015-3191-y}.
\bibitem{PosSha} A.~Postnikov and B.~Shapiro, Trees, parking functions, syzygies, and deformations of monomial ideals,
  \textit{Trans.\ Amer.\ Math.\ Soc.} \textbf{356} (2004) pp. 3109--3142.
 URL: \url{https://doi.org/10.1090/ S0002-9947-04-03547-0}.
\bibitem{Stanv2} R.~P.~Stanley, Enumerative combinatorics. Vol. 2.
With a foreword by Gian-Carlo Rota and appendix by Sergey Fomin.
\textit{Cambridge Stud. Adv. Math.} \textbf{62} (1999),
Cambridge Univ. Press.
\bibitem{Stan} \bysame, 
Hyperplane arrangements, interval orders
  and trees, \textit{Proc.\ Nat.\ Acad.\ Sci.} \textbf{93} (1996)
  pp. 2620--2625.
\bibitem{Stan2} \bysame, 
An introduction to hyperplane
  arrangements, in: E.\ Miller, V.\ Reiner, B.\ Sturmfels (Eds.),
  \emph{Geometric Combinatorics}, in: IAS/Park City Mathematics
    Series \textbf{13} (2007), A.M.S. pp. 389--496.
\bibitem{Yan} C.~Yan, Parking Functions, in \textit{Handbook of
  Enumerative Combinatorics}, M.\ B\'ona (ed.), Discrete Math. and Its
  Appl., CRC Press, Boca Raton-London-New York (2015)  pp. 589--678.
\bibitem{Zas} T.~Zaslavsky,
Facing up to arrangements: face-count formulas for partitions of space by hyperplanes.
\textit{Mem.\ Amer.\ Math.\ Soc.} \textbf{1} (1975), 102 pp.
 URL: \url{http://dx.doi.org/10.1090/memo/0154}.
\end{thebibliography}
\end{document}